\numberwithin{equation}{section}
\def\bz{\bar z}
\def\bi{\bar i}
\def\bj{\bar j}
\def\bk{\bar k}
\def\bl{\bar l}
\def\br{\bar r}
\def\p{\partial}
\def\bp{\bar\partial}
\def\a{\alpha}
\def\b{\beta}
\def\bcal_k{\mathcal B_k}
\newcommand{\kahler}{K\"ahler }
\def\tr{{\rm tr}}
\DeclareMathOperator{\Td}{Td}
\DeclareMathOperator{\ch}{ch}
\DeclareMathOperator{\cc}{c}
\DeclareMathOperator{\Tr}{Tr}
\DeclareMathOperator{\ric}{Ric}
\DeclareMathOperator{\End}{End}
\def\({\left(}
\def\){\right)}
\def\Im{{\rm Im\,}}
\def\re{{\rm Re\,}}
\newtheorem{thm}{{\sc Theorem}}[section]
\newtheorem*{maindefn*}{{\sc Definition}}
\newtheorem{theo}{{\sc Theorem}}[section]
\newtheorem{maintheo}{{\sc Theorem}}
\newtheorem{mainprop}{{\sc Proposition}}
\newtheorem{lem}[theo]{{\sc Lemma}}
\newtheorem{prop}[theo]{{\sc Proposition}}
\theoremstyle{remark}
\newtheorem{remark}{Remark}
\newenvironment{Ack}{\medskip\noindent{\it Acknowledgements:\/} }{\medskip}
\newtheoremstyle{dotless}{}{}{\itshape}{}{\bfseries}{}{ }{}
\theoremstyle{dotless}
\newtheorem{definition}[theo]{{\sc Definition}}
\title[Partition functions of DPP on polarized K\"ahler manifolds]{Partition functions of determinantal point processes on polarized K\"ahler manifolds}
\author{Kiyoon Eum}
\address{Department of Mathematical Sciences, KAIST, 291 Daehak-ro, Yuseong-gu, Daejeon 34141, South Korea}
\email{kyeum@kaist.ac.kr}
\begin{document}

\begin{abstract}
In this paper, we study the full asymptotic expansion of the partition functions of determinantal point processes defined on a polarized \kahler manifold. We show that the coefficients of the expansion are given by geometric functionals on \kahler metrics satisfying the cocycle identity, whose first variations can be expressed through the TYZ expansion coefficients of the Bergman kernel. In particular, these functionals naturally generalize the Mabuchi functional in \kahler geometry and the Liouville functional on Riemann surfaces. We further show that Futaki-type holomorphic invariants obstruct the existence of critical points of these geometric functionals, extending Lu's formula. We also verify that certain formulas remain valid up to the third coefficient without assuming polarization. Finally, we discuss the relation of our results to the quantum Hall effect (QHE), where the determinantal point process provides a microscopic model. In particular, we recover the higher-dimensional effective Chern-Simons actions derived in the physics literature and confirm a conjecture of Klevtsov on the form of the partition function asymptotics.
\end{abstract}

\maketitle

\thispagestyle{empty}
\tableofcontents

%%%%%%%%%%%%%%%%%%%%%%%%%%%%%%%%

\section{Introduction}

In this paper, we study the asymptotic expansion of the partition functions of determinantal point processes defined on a polarized K\"ahler manifold. These processes were introduced by Berman in a series of papers \cite{berman2013determinantal,berman2014determinantal,berman2017large} and have been shown to satisfy a large deviation principle.

We derive a full asymptotic expansion of the partition function in two ways: one using Bergman kernel asymptotics and the other using the Quillen anomaly formula along with the asymptotic expansion of the Ray-Singer analytic torsion. By combining these two expressions, we analyze the properties of each coefficient in the asymptotic expansion. Each coefficient is given by geometric functionals on K\"ahler metrics that satisfy the cocycle identity, and its first variation can be expressed in terms of the coefficients of the Bergman kernel asymptotics. In particular, the second coefficient corresponds to the Mabuchi functional in K\"ahler geometry, while the third coefficient generalizes the Liouville functional on Riemann surfaces to higher dimensions.

Furthermore, we show that the holomorphic invariant introduced by Futaki \cite{futaki2004asymptotic} obstructs the existence of critical points for each geometric functional given by the coefficients of the asymptotic expansion. Although the construction relies on the polarization of the K\"ahler class, we demonstrate through direct computation that the results remain valid up to the third coefficient, without the polarization assumption. We will also discuss the connection between our results and the physics literature.

\subsection{Mathematical background}

Let $(M,\omega)$ be a polarized K\"ahler manifold. That is, there is a Hermitian line bundle $(L,h)$ over $M$ such that its Chern curvature form is given by $-i\omega$. Let $d_k:=\dim H^0(M,L^k)$. The determinantal point process (DPP) associated with $L^k$ is a probability measure $\mu_{k,\omega}$ on $M^{d_k}$ defined as
\begin{equation*}
\mu_{k,\omega}:=\frac{1}{Z_k[h]}|\Psi^k(z_1,\cdots,z_{d_k})|^2_{(h^k)^{\boxtimes d_k}}\prod_{i=1}^{d_k}\frac{\omega^n}{n!},
\end{equation*}
where $\Psi^k$ is a holomorphic section of $(L^k)^{\boxtimes d_k}$ over $M^{d_k}$ given by the Slater determinant. A more detailed definition will be given in Section \ref{sectiondpp}. It was introduced by Berman and subsequently studied by him, particularly regarding its relation to canonical Kähler metrics \cite{berman2018kahler,berman2022measure,berman2024probabilistic}. See also \cite{fujita2016berman,fujita2018k,aoi2024microscopic} for related results. 

The normalization factor $Z_k[h]$, also known as the partition function, is essentially Donaldson's $\mathcal{L}_k$-functional introduced in \cite{donaldson2005scalar}. The first term in the large $k$ asymptotics of $Z_k[h]$ without assuming positivity of the curvature of $h$, was studied by Berman-Boucksom \cite{berman2010growth}, extending the method of \cite{donaldson2005scalar}. For a recent extension of their results, see \cite{finski2025small}. In this paper, we focus on the (smooth) positive curvature case and derive the full asymptotic expansion of $Z_k[h]$ as $k\rightarrow\infty$, identifying its coefficients with geometric functionals on K\"ahler metrics.

The relation between the asymptotic expansion of $Z_k[h]$ and Bergman kernel asymptotics was given in \cite{donaldson2005scalar}. Following \cite{klevtsov2017quantum,shen2025geometric}, we have an alternative way to obtain an asymptotic expansion of $Z_k[h]$ using the Quillen anomaly formula \cite{bismut1988analytic} and the asymptotic expansion of the Ray-Singer analytic torsion \cite{vasserot1989asymptotics,finski2018full}. We will review the Quillen anomaly formula and Ray-Singer analytic torsion in Section \ref{subsec}. Combining these two approaches, we can study the properties of the coefficients in the asymptotic expansion. 

From the Quillen anomaly formula approach, the coefficients are expressed as geometric functionals involving Bott-Chern forms. Bott-Chern forms were first used by Donaldson \cite{donaldson1985anti,donaldson1987infinite} to construct geometric functionals and were further developed by Tian \cite{tian1994k,tian1999bott} in the context of K\"ahler geometry. See also \cite{weinkove2002higher}. Bott-Chern forms and related materials will be reviewed in Section \ref{subsecBC}.

From the Bergman kernel asymptotics approach, the first variation of the geometric functionals is expressed in terms of integrals of scalar quantities involving the curvature of the K\"ahler metrics, arising from the Bergman kernel asymptotic expansion. In particular, the critical point equation is of the form $a_j-\Delta a_{j-1}\equiv \text{constant}$, where $a_j$ is the $j^{th}$ coefficient in the TYZ expansion of the Bergman kernel; see (\ref{Basymp}). Since the works of Catlin, Ruan, Tian, and Zelditch \cite{catlin1999bergman,ruan1996canonical,tian1990set,zelditch1998szego}, Bergman kernel asymptotics has become a well-studied subject with various approaches; see, for example, \cite{dai2006asymptotic,berman2008direct,douglas2010bergman,xu2012closed,hezari2016asymptotic}. Especially, we will use the explicit expression for $a_j, \;j\leq 3$ that Lu computed in \cite{lu2000lower}. By the result of \cite{lu2004log}, we know that the equation $a_j-\Delta a_{j-1}\equiv \text{constant}$ is an elliptic equation of order $2j+2$ in K\"ahler potential. 

In \cite{futaki2004asymptotic}, Futaki introduced a family of holomorphic invariants that generalize various integral invariants, including the Futaki invariant \cite{futaki1983compact} and the Bando-Futaki invariant \cite{bando2006obstruction}. We show that these invariants serve as obstructions to the existence of critical points of our geometric functionals $S_j$. As a byproduct, we extend Lu's formula $(4.4)$ in \cite{lu2004k} to all $j\geq 0$, where it was originally verified by direct computation up to $j=2$.

In the standard GIT picture of K\"ahler geometry, geometric functionals correspond to the log-norm functionals, while Futaki-type invariants correspond to the weight. See also \cite{foth2007manifold,eum2025asymptotic} for a moment map interpretation of the functions $a_j-\Delta a_{j-1}$. Thus, equations of the form $a_j-\Delta a_{j-1}\equiv \text{constant}$ formally fit into the GIT setting except the convexity of the log-norm functional. We compute the second variation of $S_2$ with a view to this problem. See \cite{cristofori2025third} and references therein for related work on equations of the form $a_j\equiv \text{constant}$.

Throughout this paper, we emphasize when the polarization assumption is not used. In particular, in the Appendix \ref{appendix}, we directly verify some of our results without the polarization assumption, following the spirit of original work of Mabuchi \cite{mabuchi1986k}.

\subsection{Relation to physics literature}

In the physics literature, the determinantal point process on Riemann surfaces corresponds to the plasma analogy in the quantum Hall effect (QHE). For a general physics background on the QHE, see \cite{tong2016lectures}; for the role of geometric functionals in the QHE, see \cite{can2015geometry,klevtsov2016geometry}. The form of asymptotic expansion of the corresponding partition function was conjectured by Zabrodin-Wiegmann in \cite{zabrodin2006large} for $\mathbb{CP}^1$ and later proved in \cite{klevtsov2014random,klevtsov2017quantum,shen2025geometric} for arbitrary Riemann surfaces with pure bulk assumption. For mathematical results in the presence of an edge, see \cite{serfaty2024lectures,byun2023partition,byun2025free} and references therein.

Recently, higher-dimensional QHE has also been studied in cases where the even-dimensional spatial manifold has a complex structure \cite{karabali2016geometry,karabali2023transport,agarwal2025fractional}. In particular, an effective action for higher-dimensional QHE was derived in terms of Chern-Simons forms. We provide an alternative derivation of this result in Section \ref{sectioncs}. In dimension $n=1$, Klevtsov-Ma-Marinescu-Wiegmann \cite{klevtsov2017quantum} derived the effective action in terms of the Chern-Simons functional considering a more general moduli space.

In \cite{klevtsov2014random}, based on direct computations in dimension $n=1$, Klevtsov conjectured the form of the asymptotic expansion of partition function in all dimensions. We confirm his conjecture in Section \ref{sectiondpp} (see Remark \ref{klev}). We also briefly discuss the relationship between the asymptotics of the QHE partition function and the 2D quantum gravity model in Section \ref{sectionpathint}. Since this discussion is independent of the other parts of the paper, we defer it to Section \ref{sectionpathint}.

\subsection{Statement of the main results}

Now we state our main results, starting with the asymptotic expansion of the DPP partition function. Note that in the main text, we will use a slightly different normalization convention, which will introduce additional $2\pi$ factors. Also we will identify a hermitian metric $h$ with K\"ahler potential $\varphi$. Let $\mathcal{K}_0$ be the space of K\"ahler forms in $[\omega]$ and $\mathcal{K}_\omega$ be the space of K\"ahler potentials.

\begin{maintheo}[Asymptotics of the Partition Functions]\label{m1}
$\log \frac{Z_k[\varphi]}{Z_k[0]}$ admits the following form of asymptotic expansion as $k\rightarrow\infty$:
\begin{equation}\label{ver2}
\log \frac{Z_k[\varphi]}{Z_k[0]}=kd_{k}S_0[\varphi,0]+k^{n}S_1[\omega_\varphi,\omega]+k^{n-1}S_2[\omega_\varphi,\omega]+k^{n-2}S_3[\omega_\varphi,\omega]+\cdots.
\end{equation}
For $j>0$, $S_j[\cdot,\cdot]:\mathcal{K}_0\times\mathcal{K}_0\rightarrow\mathbb{R}$ satisfies
\begin{enumerate}
    \item Cocyle identity : for any three K\"ahler metrics $\omega_2, \omega_1, \omega_0$ in $\mathcal{K}_0$,
    \begin{align*}
S_j[\omega_1,\omega_0]&=-S_j[\omega_0,\omega_1],\\   
S_j[\omega_2,\omega_0]&=S_j[\omega_2,\omega_1]+S_j[\omega_1,\omega_0];
    \end{align*}
    \item The derivative of $S_j[\omega_\varphi,\omega]$ on $\mathcal{K}_0=\mathcal{K}_\omega/\mathbb{R}$ is given by
\begin{equation}\label{firstvar}
\delta_\varphi S_j[\omega_\varphi,\omega]=\int_M \delta\varphi\left( \widehat{a_j(\omega_\varphi)}+\Delta_\varphi a_{j-1}(\omega_\varphi)-a_{j}(\omega_\varphi) \right) \frac{\omega_\varphi^n}{n!},
\end{equation}
where $\widehat{a_j(\omega_\varphi)}$ denotes the average of $a_j(\omega_\varphi)$
\begin{equation*}
\widehat{a_j(\omega_\varphi)}:=\frac{1}{V}\int_M a_j(\omega_\varphi)\frac{\omega_\varphi^n}{n!}=\frac{1}{V}\int_M \Td_j(T^{1,0}M)\ch_{n-j}(L),
\end{equation*}
which does not depend on $\omega_\varphi$.
    \item For $j>n+1$, $S_j[\omega_2,\omega_1]$ is an exact cocyle. That is, it can be written as a difference
\begin{equation*}
S_j[\omega_2,\omega_1]=s_j[\omega_2]-s_j[\omega_1],
\end{equation*}
for a local functional $s_j[\cdot] : \mathcal{K}_0\rightarrow\mathbb{R}$ of the metric.
\end{enumerate}
\end{maintheo}

One can check that $S_0$ is (minus of) the Aubin-Yau functional $I$, and $S_1$ is the Mabuchi functional $\mathcal{M}$ in K\"ahler geometry. We will show that $S_2$ is explicitly given by
\begin{align*}
S_2[\omega_\varphi,\omega]=&-i\int_M BC(\Td_2;\omega_\varphi,\omega)\frac{\omega^{n-1}}{(n-1)!}+\Td_{2}(R_\varphi)\frac{-i}{(n-1)!}\sum^{n-2}_{s=0}\varphi \omega_{\varphi}^s\wedge \omega^{n-2-s}\\
&+\frac{i}{V}\left(\int_M \Td_2(T^{1,0}M)\frac{\omega^{n-2}}{(n-2)!}\right)\times\int_M \frac{-i}{(n+1)!}\sum^{n}_{s=0}\varphi \omega_{\varphi}^s\wedge \omega^{n-s}.
\end{align*}
As we will see, $S_2$ reduces to the Liouville action in dimension $n=1$. We compute its second variation as follows:

\begin{mainprop}\label{m2}
Let $\varphi_t$ be a smooth path in $\mathcal{K}_\omega$ with $\varphi_0=0$. Then we have the following.
\begin{align*}
& \left.\frac{d^2}{dt^2}\right|_{t=0}S_2[\omega_t,\omega]=\int_M \ddot{\varphi}\left(\widehat{a_2}+\frac{1}{6}\Delta S-\frac{1}{24}\left(|R|^2-4|\ric|^2+3S^2\right) \right) \frac{\omega^n}{n!} +\widehat{a_2}\dot{\varphi}\Delta\dot{\varphi}\frac{\omega^n}{n!}  \\
&\;+ \frac{i}{6}\p\Delta\dot{\varphi}\wedge\bp\Delta\dot{\varphi}\wedge\frac{\omega^{n-1}}{(n-1)!}-\frac{i}{6}\p\dot{\varphi}\wedge\bp\dot{\varphi}\wedge i\p\bp S\wedge \frac{\omega^{n-2}}{(n-2)!}\\
&\;+\frac{1}{4}(\Delta\dot{\varphi})^2S\frac{\omega^{n}}{n!}+\frac{i}{8}\p\dot{\varphi}\wedge\bp\dot{\varphi}\wedge\ric^2\wedge\frac{\omega^{n-3}}{(n-3)!}+\frac{i}{24}\p\dot{\varphi}\wedge\bp\dot{\varphi}\wedge\Tr(R^2)\wedge\frac{\omega^{n-3}}{(n-3)!} \\
&\;-\frac{1}{2}\Delta\dot{\varphi}\langle i\p\bp\dot{\varphi},\ric \rangle \frac{\omega^{n}}{n!} +\frac{1}{12}\left( g^{i\bar{q}}g^{p\bar{j}}g^{r\bar{l}}g^{k\bar{s}} R_{p\bar{q}r\bar{s}} \p_i\p_{\bar{j}}\dot{\varphi} \p_k\p_{\bar{l}}\dot{\varphi} \right)\frac{\omega^{n}}{n!}. 
\end{align*}
\end{mainprop}

The following Theorem generalizes formula \cite[(4.4)]{lu2004k} for all $j\geq 0$. Let $X$ be a holomorphic vector field on $M$ and $\theta_X$ be a holomorphy potential function satisfying $\iota_X \omega = -\bp \theta_X$.

\begin{maintheo}\label{m3}
Let $(M,\omega)$ be a polarized K\"ahler manifold. Let $X$ and $\theta_X$ be given as above and suppose it is purely imaginary. Then for all $j\geq 0$, we have the following identity:
\begin{equation}\label{m3formula}
\int_M \theta_X \left( a_j(\omega)-\Delta a_{j-1}(\omega) \right) \frac{\omega^n}{n!}= \frac{1}{(n+1-j)!}\int_M \Td_j(R+\nabla X)(\omega+\theta_X)^{n+1-j}.
\end{equation}
More generally, for non purely imaginary $\theta_X$, we have
\begin{equation}\label{m3formula2}
\int_M i\Im \theta \left( a_j(\omega)-\Delta a_{j-1}(\omega) \right) \frac{\omega^n}{n!}= \frac{1}{(n+1-j)!}\int_M \Td_j(R+\frac12(\nabla X +(\overline{\nabla X})^{\flat,\sharp}))(\omega+i\Im \theta)^{n+1-j}.
\end{equation}
Under the normalization of $\theta_X$ by $\int_M \theta_X \omega^n =0$, the right hand sides of (\ref{m3formula}) and (\ref{m3formula2}) are independent of the choice of the K\"ahler metric in $\cc_1(L)$.
\end{maintheo}

The right hand side of (\ref{m3formula}) is precisely the holomorphic invariant introduced in \cite{futaki2004asymptotic}. We will show that these invariants obstruct the existence of critical points of $S_j$. When there is no holomorphic vector fields, these obstructions vanish. In this context, assuming that $Aut(M,L)$ is discrete, we can prove the following proposition using Donaldson's result \cite{donaldson2001scalar} on balanced metrics.

\begin{mainprop}\label{m4}
Suppose that $Aut(M,L)$ (modulo trivial action of $\mathbb{C}^*$) is discrete. If $\omega_\infty$ is a critical point of $S_1$ in $\mathcal{K}_0$, then it is a critical point of $S_j$ for all $j>0$.
\end{mainprop}

Finally, we derive a formula for the effective action of the higher-dimensional QHE, as presented in \cite{karabali2016geometry}, expressed in terms of Chern-Simons forms.

\begin{mainprop}\label{m5}
The effective action for the higher-dimensional quantum Hall effect associated with $L$ is given by
\begin{equation}\label{eff}
S_{eff}=2\int_{M\times [0,1]} \left[\Td(R_{\mathbf{T'M}}(\boldsymbol{\omega}))CS(\ch;\boldsymbol{\nabla}_\mathbf{L},\boldsymbol{\nabla^0}_\mathbf{L})+ CS(\Td;\boldsymbol{\nabla}_\mathbf{T'M},\boldsymbol{\nabla^0}_\mathbf{T'M})\ch(R_\mathbf{L}(\mathbf{h_0})) \right]_{2n+1}+\widetilde{S}.
\end{equation}
As we replace $L$ with $L^k$ and send $k\rightarrow \infty$, the leading order ($k^{n+1}$) term of the effective action is given by 
\begin{equation*}
2\int_{M\times[0,1]} CS(\ch_{n+1};\boldsymbol{\nabla}_\mathbf{L},\boldsymbol{\nabla^0}_\mathbf{L}).
\end{equation*}
\end{mainprop}

\subsection{Structure of the paper}

\begin{itemize}
  \item Section \ref{sectionprelim} contains preliminaries on Bergman kernel asymptotics, Chern-Simons and Bott-Chern forms, Quillen metrics, and Ray-Singer analytic torsion. 
  \item Section \ref{sectiondpp} defines the determinantal point process, and contains the proof of Theorem \ref{m1}.
  \item Section \ref{sectionFutaki} contains the proof of Theorem \ref{m3} and Proposition \ref{m4}.
  \item Section \ref{sectionpathint} briefly digresses into the relationship between the asymptotics of the partition function and the 2D quantum gravity model.
  \item Section \ref{sectioncs} contains the proof of Proposition \ref{m5}.
  \item Appendix \ref{appendix} provides some explicit computations on $S_2$ and contains the proof of Proposition \ref{m2}.
\end{itemize}

\begin{Ack}
The author would like to thank Siarhei Finski for clarifications regarding his Theorem and Remark \ref{Finskirmk}.
This work was supported by the National Research Foundation of Korea (NRF) grant funded by the Korea government(MSIT) RS-2024-00346651.
\end{Ack}

\section{Preliminaries}\label{sectionprelim}

\subsection{Bergman kernel asymptotics}
In this paper, we consider $n$-dimensional polarized compact K\"ahler manifold $(M,g,\omega)$, $g$ is a K\"ahler metric, $\omega$ is a K\"ahler form with Hermitian ample line bundle $(L,h)$ whose Chern curvature form is given by $R(h)=-i\omega$. For each $k\in\mathbb{N}$, $h$ induces a Hermitian metric $h^k$ on $L^k$.
Together with K\"ahler volume form $\omega^n/n!$, it induces the $L^2$ metric on $H^0(M,L^k)$. The diagonal of the $k^{th}$ Bergman kernel $\rho_k(\omega)$ associated with these data is defined by
\begin{equation*}
\rho_k(\omega)=\sum_{i=1}^{d_k} ||S_i^k||_{h^k}^2 \in C^\infty(M,\mathbb{R}),
\end{equation*}
where $d_k=\dim H^0(M,L^k)$ and $\left(S_i^k\right)_{i=1}^{d_k}$ is any orthonormal basis of $H^0(M,L^k)$ with respect to $L^2$ metric induced by $h^k$ and $\omega^n/n!$. 

Since the initial works of Catlin, Ruan, Tian, and Zelditch, it is now well known that the diagonal of the Bergman kernel admits a full asymptotic expansion (called TYZ expansion) in $k$:
\begin{equation}\label{Basymp}
(2\pi)^n\rho_k(\omega)(x)=a_0(\omega)(x)k^n+a_1(\omega)(x)k^{n-1}+a_2(\omega)(x)k^{n-2}+\cdots,
\end{equation}
where $a_j(\omega)$ are smooth coefficients given by universal polynomials in curvature $R$ of $g$ and its derivatives with order $\leq 2j-2$ at $x$. For notational convenience, set $a_{-1}:=0$. Note that $a_j(\omega)$ can be understood as a formal expression even when $[\omega]$ is not polarized. For the precise meaning of the asymptotic expansion (\ref{Basymp}) and a comprehensive account of the subject, see \cite{ma2007holomorphic}.

Here we adopt the convention in which there are no $2\pi$ factors in $a_j$s. In this convention, integrating $(2\pi)^n\rho_k$ over $M$, we get the following form of the asymptotic Riemann-Roch-Hirzebruch formula:
\begin{equation}\label{aRR}
(2\pi)^n\dim H^0(M,L^k)=(2\pi)^n\int_M \rho_k(\omega)\frac{\omega^n}{n!}  = k^n\int_M a_0(\omega)\frac{\omega^n}{n!}+k^{n-1}\int_M a_1(\omega) \frac{\omega^n}{n!} + \cdots.
\end{equation}
This motivates the slightly unconventional modification of $\Td$ and Chern character forms which we will introduce in the next subsection. The only purpose of it is to prevent the $2\pi$ factors from clogging the formulas.

In \cite{lu2000lower}, Lu explicitly computed the first four coefficients $a_1, a_2, a_3$ of the expansion as ($a_0=1$ was already noted in \cite{zelditch1998szego}):
\begin{equation}\label{lower}
\left\{ \begin{aligned} 
  a_0 &= 1\\
  a_1 &= \frac12 S\\
  a_2 &= \frac13 \Delta S + \frac{1}{24}(|R|^2-4|\ric|^2+3S^2)\\
  a_3 &= \frac18\Delta\Delta S + \cdots\\
\end{aligned} \right.
\end{equation}
where $R, \ric, S$ are the Riemann curvature, Ricci curvature, and scalar curvature of the K\"ahler metric $g$, respectively, and $\Delta$ is one-half of the Riemannian Laplacian, defined by $\Delta f=g^{k\bl} \p_k \p_{\bl} f$. In particular, $a_1=\frac12 S$ played an important role in \cite{donaldson2001scalar}. The explicit formula for $a_2$ will be used in this paper.

\subsection{Invariant polynomials and secondary characteristic forms}\label{subsecBC}
Let $M'$ be a $n'$ dimensional compact smooth manifold and $E$ be a $\mathbb{C}^r$ vector bundle over $M'$. Given a connection $\nabla$ on $E$, denote its curvature form by $R(\nabla)$. For any symmetric $GL(r,\mathbb{C})$-invariant $p$-linear function $\phi$ on $\mathfrak{gl}(r,\mathbb{C})$, we get a Chern-Weil form
\begin{equation*}
\phi(R(\nabla)):=\phi(R(\nabla),R(\nabla),\cdots,R(\nabla))\in \Omega^{2p}(M'),
\end{equation*}
representing a characteristic class in $H^{2p}(M',\mathbb{C})$. Usually $\phi(A):=\phi(A,\cdots,A)$ is called an invariant polynomial in $A\in\mathfrak{gl}(r,\mathbb{C})$ and is identified with its polarization. For invariant polynomial $\phi$ and any connection $\nabla$ on $E$, we denote $\phi(E):=\phi(R(\nabla))$. Invariant polynomials important to us are the Chern polynomial $\cc$, the Chern character polynomial $\ch$, and the Todd polynomial $\Td$, which are defined as follows. For $A\in\mathfrak{gl}(r,\mathbb{C})$, define
\begin{align*}
\cc(A)&:=\det (I+iA)=1+\cc_1(A)+\cc_2(A)+\cdots;\\
\ch(A)&:=\Tr (e^{iA}) = r+\ch_1(A)+\ch_2(A)\cdots;\\
\Td(A)&:=\det\frac{iA}{1-e^{-iA}}=1+\Td_1(A)+\Td_2(A)+\cdots.
\end{align*}
The corresponding $p$-linear functions are then defined by polarization. Note that we omit $2\pi$ factors from their standard definitions altogether so that the asymptotic Riemann-Roch-Hirzebruch formula for $L^k$ as $k\rightarrow\infty$ now reads (which follows from Riemann-Roch-Hirzebruch theorem and Kodaira-Serre vanishing theorem):
\begin{equation}\label{aRR2}
(2\pi)^n\dim H^0(M,L^k)=k^n\int_M \Td_0(T^{1,0}M)\ch_n(L)+k^{n-1}\int_M \Td_1(T^{1,0}M)\ch_{n-1}(L)+\cdots.
\end{equation}
Comparing it to (\ref{aRR}), we get 
\begin{equation*}
\int_M a_j(\omega)\frac{\omega^n}{n!}=\int_M \Td_j(T^{1,0}M)\ch_{n-j}(L)
\end{equation*}
for $j=0,1,\cdots$.

Let $I_p(r)$ be the space of all symmetric $GL(r,\mathbb{C})$-invariant $p$-linear functions. Put $I(r):=\oplus_{p\geq 0}I_p(r)$. Denote the space of connections on $E$ by $\mathcal{A}_E$. Then we have the following constructions, called secondary characteristic forms (see \cite{BC} and \cite{pingali2014bott}).
\begin{prop}\label{cs}
There is a well-defined map
\begin{equation*}
CS : I(r)\times \mathcal{A}_E \times \mathcal{A}_E \rightarrow \oplus_{p\geq0}\Omega^{2p}(M')/\Im d
\end{equation*}
defined as follows: For any $\phi\in I_p(r)$,
\begin{equation*}
CS(\phi;\nabla^1,\nabla^0)=\int_0^1 \phi'(R(\nabla^t);\dot{\nabla^t})\,dt,
\end{equation*}
where $\nabla^t$ is any smooth path in $\mathcal{A}_E$ joining $\nabla^0$ to $\nabla^1$, and $\phi'(A;B)$ is a shorthand notation for $\sum_\a \phi(A,...,B_\a,...,A)$. Such a $CS(\phi;\nabla^1,\nabla^0)$ is called the Chern-Simons form. Chern-Simons forms satisfy
\begin{enumerate}
    \item $CS(\phi;\nabla,\nabla)=0$ and for any three connections $\nabla^2, \nabla^1, \nabla^0$ in $\mathcal{A}_E$, 
    \begin{equation*}
CS(\phi;\nabla^2,\nabla^0)=CS(\phi;\nabla^2,\nabla^1)+CS(\phi;\nabla^1,\nabla^0) ;
    \end{equation*}
    \item $dCS(\phi;\nabla^1,\nabla^0)=\phi(R(\nabla^1))-\phi(R(\nabla^0)) ;$
    \item If $\nabla^t$ is any smooth path in $\mathcal{A}_E$, we have
    \begin{equation*}
\frac{d}{dt}CS(\phi;\nabla^t,\nabla)=\phi'(R(\nabla^t);\dot{\nabla^t}).
    \end{equation*}
\end{enumerate}
\end{prop}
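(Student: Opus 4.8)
The plan is to build everything from a single \emph{universal connection} on the product $M'\times[0,1]$, from which the transgression formula (2) and the well-definedness drop out together, while (1) and (3) follow as easy consequences. Two structural facts will be used throughout. Since $\mathcal{A}_E$ is an affine space modelled on $\Omega^1(M',\End(E))$, a smooth path $\nab^t$ has velocity $\dot\nab^t\in\Omega^1(M',\End(E))$, and its curvature satisfies the variation formula $\dot R(\nab^t)=\nab^t\dot\nab^t$ (covariant exterior derivative). The second fact is the fundamental Chern--Weil identity: for $\End(E)$-valued forms $\omega_1,\dots,\omega_p$ and the induced connection $\nab$ on $\End(E)$,
$$ d\,\phi(\omega_1,\dots,\omega_p)=\sum_{\alpha}(-1)^{\deg\omega_1+\cdots+\deg\omega_{\alpha-1}}\phi(\omega_1,\dots,\nab\omega_\alpha,\dots,\omega_p), $$
which packages both the Leibniz rule and the infinitesimal $GL(r,\mathbb{C})$-invariance of $\phi$; combined with the Bianchi identity $\nab R(\nab)=0$ it yields the closedness of every Chern--Weil form $\phi(R(\nab))$.

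For the main step, given $\nab^0,\nab^1$ and a path $\nab^t$ joining them, I would pull $E$ back to $M'\times[0,1]$ and equip it with the connection $\tilde\nab$ whose restriction to each slice $M'\times\{t\}$ is $\nab^t$ and which is trivial in the $t$-direction; its curvature is $\tilde R=R(\nab^t)+dt\wedge\dot\nab^t$. By Chern--Weil on $M'\times[0,1]$ the form $\phi(\tilde R)$ is closed. Expanding $\phi(\tilde R)$ multilinearly, the coefficient of $dt$ is exactly $\phi'(R(\nab^t);\dot\nab^t)$ (terms with two factors of $dt$ vanish), so fiber integration gives $\pi_*\phi(\tilde R)=\int_0^1\phi'(R(\nab^t);\dot\nab^t)\,dt=CS(\phi;\nab^1,\nab^0)$. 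Applying Stokes' theorem for fiber integration to the closed form $\phi(\tilde R)$ — namely $d_{M'}\pi_*\alpha=\alpha|_{t=1}-\alpha|_{t=0}$ when $d\alpha=0$, where $\alpha|_{t=j}$ is pullback to the slice — then yields property (2): $d\,CS(\phi;\nab^1,\nab^0)=\phi(R(\nab^1))-\phi(R(\nab^0))$.

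For well-definedness, given two paths with the same endpoints I would use that $\mathcal{A}_E$ is convex, hence they are homotopic rel endpoints, and repeat the construction over $M'\times[0,1]_s\times[0,1]_t$ to produce a closed form $\phi(\hat R)$. Fiber-integrating over the square and applying Stokes, the difference of the two Chern--Simons forms equals an exact form plus the fiber integrals over the edges $\{t=0\}$ and $\{t=1\}$; on these edges the connection is constant in $s$, so the $ds$-component vanishes and these contributions drop out. This shows $CS$ is well defined in $\bigoplus_{p\geq0}\Omega^{2p}(M')/\Im d$. I expect this to be the technical heart of the argument: the careful bookkeeping of the corner/boundary contributions of the square, and the verification that the endpoint edges contribute nothing, is where the main effort lies.

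The remaining properties are then immediate. Property (1): the constant path gives $CS(\phi;\nab,\nab)=0$ since $\dot\nab=0$, and the cocycle identity follows by concatenating a path $\nab^0\to\nab^1$ with a path $\nab^1\to\nab^2$, splitting the defining integral additively, and invoking well-definedness. Property (3) is the fundamental theorem of calculus applied along the given path to $t\mapsto CS(\phi;\nab^t,\nab)=\int_0^t\phi'(R(\nab^\tau);\dot\nab^\tau)\,d\tau$, which as an equality of actual forms gives $\tfrac{d}{dt}CS(\phi;\nab^t,\nab)=\phi'(R(\nab^t);\dot\nab^t)$.
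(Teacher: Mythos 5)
The paper does not actually prove this proposition; it is quoted from the literature (the references \cite{BC} and \cite{pingali2014bott} are cited immediately before the statement), so there is no in-paper argument to compare against. Your proof is correct and is the classical Chern--Simons transgression construction: the universal connection on $M'\times[0,1]$ with curvature $R(\nab^t)+dt\wedge\dot{\nab^t}$, fiber integration, and Stokes' theorem for the fiber integral give the definition and property \textit{(2)} in one stroke, and the argument over the square $M'\times[0,1]^2$ (using convexity of $\mathcal{A}_E$ and the vanishing of the $ds$-component on the endpoint edges) is exactly the standard way to get well-definedness modulo $\Im d$. This matches the approach of the cited sources in substance. Two minor points you should tidy up rather than leave implicit: (i) for the cocycle identity, the concatenation of two smooth paths is only piecewise smooth, so you should either observe that the defining integral is invariant under (weakly monotone) reparametrization and extends to piecewise-smooth paths, or reparametrize each segment to have vanishing velocity at the junction before concatenating; (ii) property \textit{(3)} is an identity of honest forms only for the specific representative $\int_0^t\phi'(R(\nab^\tau);\dot{\nab^\tau})\,d\tau$, as you note --- this is the reading the paper relies on later (e.g.\ in the variational arguments of Section \ref{sectionFutaki}), so it is worth stating explicitly that the class mod $\Im d$ has been refined to a preferred representative there.
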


Now assume $M'$ is a complex manifold and $E$ is a holomorphic vector bundle. For each Hermitian metric $h$ on $E$, let $R(h)$ be a Chern curvature form associated to $h$. Denote the space of Hermitian metrics on $E$ by $\mathcal{H}_E$. Then we have the following.

\begin{prop}\label{bc}
There is a well-defined map
\begin{equation*}
BC : I(r)\times \mathcal{H}_E \times \mathcal{H}_E \rightarrow \oplus_{p\geq0}\Omega^{p,p}(M')/\Im \p + \Im \bp
\end{equation*}
defined as follows: For any $\phi\in I_p(r)$,
\begin{equation}\label{homotop}
BC(\phi;h_1,h_0)=\int_0^1 \phi'(R(h_t);\dot{h_t}h_t^{-1})\,dt,
\end{equation}
where $h_t$ is any smooth path in $\mathcal{H}_E$ joining $h_0$ to $h_1$. Such a $BC(\phi;h_1,h_0)$ is called the Bott-Chern form. Bott-Chern forms satisfy
\begin{enumerate}
    \item $BC(\phi;h,h)=0$ and for any three metrics $h_2, h_1, h_0$ in $\mathcal{H}_E$, 
    \begin{equation*}
BC(\phi;h_2,h_0)=BC(\phi;h_2,h_1)+BC(\phi;h_1,h_0) ;
    \end{equation*}
    \item $\bp\p BC(\phi;h_1,h_0)=\phi(R(h_1))-\phi(R(h_0)) ;$
    \item If $h_t$ is any smooth path in $\mathcal{H}_E$, we have
    \begin{equation*}
\frac{d}{dt}BC(\phi;h_t,h)=\phi'(R(h_t);\dot{h_t}h_t^{-1}).
    \end{equation*}
\end{enumerate}
\end{prop}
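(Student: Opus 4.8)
The plan is to imitate the proof of the Chern--Simons case (Proposition \ref{cs}), replacing the exterior derivative $d$ by the operator $\bp\p$ and exploiting the fact that the Chern curvature of a Hermitian holomorphic bundle is of pure type $(1,1)$. First I would fix a local holomorphic frame, write the Chern connection form as $\theta=h^{-1}\p h$ so that $R(h)=\bp\theta$ is $(1,1)$, and record the two structural inputs on which everything rests. The first is the variational formula
\be
\delta R=\bp\nab^{1,0}\mu,\qquad \mu:=h^{-1}\delta h\in C^\infty(M',\End E),
\ee
obtained by differentiating $\theta=h^{-1}\p h$ (one finds $\delta\theta=\p\mu+[\theta,\mu]=\nab^{1,0}\mu$, where $\nab=\nab^{1,0}+\bp$ is the connection induced on $\End(E)$-valued forms). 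The second is the Bianchi identity $\nab R=0$ together with $(\nab^{1,0})^2\mu=[R^{2,0},\mu]=0$ and $\bp^2\mu=[R^{0,2},\mu]=0$, which hold precisely because $R$ is of type $(1,1)$ and which give $\nab\nab^{1,0}\mu=\bp\nab^{1,0}\mu$.

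Next I would prove the pointwise transgression identity
\be
\frac{d}{dt}\phi(R(h_t))=\bp\p\,\phi'(R(h_t);\mu_t),\qquad \mu_t=h_t^{-1}\dot h_t .
\ee
The argument is the standard Chern--Weil one with bookkeeping by bidegree. From invariance of $\phi$ and $\nab R=0$ one has $d\,\phi'(R;\mu)=\phi'(R;\nab\mu)$; splitting into types (note $\phi'(R;\mu)$ is $(p{-}1,p{-}1)$) gives $\p\phi'(R;\mu)=\phi'(R;\nab^{1,0}\mu)$. Applying the same Leibniz rule to $\psi:=\phi'(R;\nab^{1,0}\mu)$, using the Bianchi identity to discard the terms where $\nab$ hits an $R$ and the relation $\nab\nab^{1,0}\mu=\bp\nab^{1,0}\mu$ above, yields $d\psi=\phi'(R;\bp\nab^{1,0}\mu)$; by type this forces $\p\psi=0$ and $\bp\psi=\phi'(R;\bp\nab^{1,0}\mu)=\phi'(R;\delta R)=\tfrac{d}{dt}\phi(R)$. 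Hence $\bp\p\,\phi'(R;\mu)=\tfrac{d}{dt}\phi(R)$. This identity gives property (3) directly (the integrand of the definition is its own $t$-derivative) and, upon integrating in $t$ and commuting $\bp\p$ past $\int_0^1\!dt$, gives property (2), namely $\bp\p\,BC(\phi;h_1,h_0)=\phi(R(h_1))-\phi(R(h_0))$. Since $\bp\p(\Im\p+\Im\bp)=0$, this statement descends correctly to the quotient. (If one prefers the normalization $\dot h_t h_t^{-1}$ as in the statement, the two agree because $\dot h_t h_t^{-1}$ and $h_t^{-1}\dot h_t$ are conjugate by $h_t$ and $\phi'$ is conjugation invariant.)

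The main obstacle is well-definedness, i.e. independence of the path modulo $\Im\p+\Im\bp$; I would phrase it as the closedness of the $\mathcal{H}_E$-valued $1$-form $h\mapsto\big(\delta h\mapsto\phi'(R(h);h^{-1}\delta h)\big)$ on the contractible space $\mathcal H_E$. Concretely, take a two-parameter family $h_{t,s}$ interpolating two paths with endpoints fixed in $s$, set $A=h^{-1}\p_t h$, $B=h^{-1}\p_s h$, and record the flatness relation $\p_s A-\p_t B=[A,B]$ together with $\p_s R=\bp\nab^{1,0}B$. Differentiating $\int_0^1\phi'(R;A)\,dt$ in $s$ and integrating by parts in $t$ (using $\p_tR=\bp\nab^{1,0}A$) kills the boundary term because $B$ vanishes at $t=0,1$, leaving a $t$-integral of
\be
p\,\phi([A,B],R^{p-1})+p(p-1)\phi(A,\bp\nab^{1,0}B,R^{p-2})-p(p-1)\phi(B,\bp\nab^{1,0}A,R^{p-2}).
\ee
The heart of the matter is then to show this integrand is $\p$- and $\bp$-exact: reapplying the invariance identity and Bianchi (as in the previous paragraph) to the two $\nab^{1,0}$-terms produces total derivatives whose commutator corrections cancel the $\phi([A,B],R^{p-1})$ term by invariance. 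I expect this symmetry computation to be the only genuinely delicate step. Once it is in place, property (1) is immediate: $BC(\phi;h,h)=0$ because a constant path has $\dot h=0$, and the cocycle identity $BC(\phi;h_2,h_0)=BC(\phi;h_2,h_1)+BC(\phi;h_1,h_0)$ follows by concatenating paths and splitting the defining integral, path-independence guaranteeing the result is unchanged; antisymmetry is the special case $h_2=h_0$.
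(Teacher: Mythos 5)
The paper offers no proof of this proposition: it is quoted as standard background from \cite{BC} and \cite{pingali2014bott}, the only proof-related remark in the text being that property \textit{(2)} can be obtained from $\p BC(\phi;h_1,h_0)=CS(\phi;\nab^1,\nab^0)$ together with $dCS(\phi;\nab^1,\nab^0)=\phi(R(\nab^1))-\phi(R(\nab^0))$ and a bidegree count. Your argument is the standard direct one and is sound. The pointwise transgression identity $\tfrac{d}{dt}\phi(R(h_t))=\bp\p\,\phi'(R(h_t);\mu_t)$ is correctly derived from $\delta R=\bp\nab^{1,0}\mu$, $(\nab^{1,0})^2=0$ on $\End(E)$-valued forms, and the type decomposition of $d\phi'(R;\mu)$ and $d\phi'(R;\nab^{1,0}\mu)$; it gives \textit{(2)} and \textit{(3)} in one stroke, whereas the route the paper points to derives \textit{(2)} from the already-established Chern--Simons transgression of Proposition \ref{cs}. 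The one step you leave unverified --- that the $s$-derivative of the defining integral lies in $\Im\p+\Im\bp$ --- does go through as you predict: setting $\tau=(p-1)\bigl[\phi(A,\nab^{1,0}B,R^{p-2})-\phi(B,\nab^{1,0}A,R^{p-2})\bigr]$ and $\sigma=(p-1)\bigl[\phi(A,\bp B,R^{p-2})-\phi(B,\bp A,R^{p-2})\bigr]$, a computation using $\nab^{1,0}\bp+\bp\nab^{1,0}=[R,\cdot\,]$ and the invariance identity $\phi([B,A],R^{p-1})=(p-1)\phi(A,[R,B],R^{p-2})$ shows that $\tfrac{p}{2}(\bp\tau-\p\sigma)$ equals your displayed integrand, the commutator corrections absorbing the $\phi([A,B],R^{p-1})$ term exactly as claimed. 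The only point I would flag is cosmetic: the discrepancy between $\dot h_th_t^{-1}$ in the statement and your $h_t^{-1}\dot h_t$ is tied to the choice of convention $R=\bp(h^{-1}\p h)$ versus its transpose, and is immaterial here because the invariant polynomials in play are transpose-invariant; your appeal to conjugation-invariance of $\phi'$ alone is not quite the right justification, since $R$ is not fixed by conjugation by $h$.
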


From the construction or property \textit{(2)} in Proposition \ref{cs} and \ref{bc}, it is clear that 
\begin{equation}\label{bccs}
\p BC(\phi;h_1,h_0)=CS(\phi;\nabla^1,\nabla^0),
\end{equation}
where $\nabla^1,\nabla^0$ are Chern connections associated to $h_1, h_0$ respectively. In fact, this is used in \cite[Proposition 3.15]{BC} to prove the property \textit{(2)} for Bott-Chern forms.

As noted in \cite[p84]{BC}, in general, the formula (\ref{homotop}) contains nonlinear terms and cannot be directly integrated to give an explicit formula for Bott-Chern forms. A notable exception to this is when $E$ is a line bundle and $\phi$ is a Chern character polynomial. In this case, we have the following explicit formula for $BC(\ch;h_1,h_0)$. Let $h_1=h_0e^{-\varphi}$ and $\omega=iR(h_0)$. Then $iR(h_1)=\omega_{\varphi}:=\omega+i\p\bp\varphi$ and we have $\ch_j(R(h_0))=\omega^j/j!,\; \ch_j(R(h_1))=\omega_{\varphi}^j/j!$. By direct computation, one can check that
\begin{equation}\label{bcch}
BC(\ch_j;h_1,h_0)=\frac{-i}{j!}\sum^{j-1}_{s=0}\varphi \omega_{\varphi}^s\wedge \omega^{j-1-s}.
\end{equation}
Note that $\omega^j/j!$ and the right-hand side of (\ref{bcch}) make sense for arbitrary $[\omega]\in H^{1,1}(M',\mathbb{R})$. In particular, it makes sense for arbitrary K\"ahler classes that are not necessarily polarized.

Our convention for Bott-Chern forms is identical to the one in \cite{BC,tian1999bott}, differs from the one in \cite{bismut1988analytic} by multiplication of $-i$, and differs from the one in \cite{donaldson1985anti,weinkove2002higher} by multiplication of $i$.

\subsection{Ray-Singer analytic torsion and Quillen anomaly formula}\label{subsec}
In this subsection we introduce the Ray-Singer analytic torsion, the Quillen anomaly formula, and asymptotic expansion of the Ray-Singer analytic torsion only to the extent necessary for this paper. For a more general account of the related results, see \cite{bismut1988analytic,ma2007holomorphic}.

Let $(E,h_E)$ be a Hermitian holomorphic vector bundle over $M$. Let $\bp_E$ be the Dolbeault operator acting on $\Omega^{0,\bullet}(M,E)$ and denote by $\bp^{*}_E$ the formal adjoint of $\bp_E$ with respect to the $L^2$ metric $|\cdot|_{\Omega^{0,\bullet}(M,E)}$ induced by $h_E$ and $\omega^n/n!$. Let $\square_E:=\bp_E\bp^{*}_E+\bp^*_E\bp_E$ be the Kodaira Laplacian on $\Omega^{0,\bullet}(M,E)$ and $e^{-u\square_E}$ be the associated heat operator. By Hodge theory, $\square_E$ has a finite-dimensional kernel. Denote by $P^\bot$ the orthogonal projection onto its orthogonal complement. Define $\zeta$-function by
\begin{equation*}
\zeta_E(z)=-\mathcal{M}[\Tr_s[Ne^{-u\square_E}P^\bot]],
\end{equation*}
where $N$ is the number operator $N:\a\in\Omega^{0,p}(M)\mapsto p\a$, $\Tr_s$ is a supertrace $\Tr[(-1)^N\cdot]$, and $\mathcal{M}[f(u)]$ denotes Mellin transform defined by meromorphic extension of
\begin{equation*}
\mathcal{M}[f(u)](z):=\frac{1}{\Gamma(z)}\int_0^\infty f(u)u^{z-1}du\;\;\text{for}\;\re z \gg 0
\end{equation*}
over $\mathbb{C}$.
\begin{definition}
The Ray-Singer analytic torsion of $(E,h_E)$ is defined as
\begin{equation*}
T(\omega,h_E):=\exp{(-\frac12\zeta'_E(0)))}.
\end{equation*}
In terms of $\zeta$-regularized determinant, 
\begin{equation*}
T(\omega,h_E)=\prod_p {\det}'(\square_E|_{\Omega^{0,p}})^{p(-1)^{p+1}/2},
\end{equation*}
where $'$ means exclusion of zero modes.
\end{definition}

The determinant of the cohomology of $E$ is the complex line given by 
\begin{equation*}
\det H^{\bullet}(M,E)=\bigotimes_{p=0}^{n}(\det H^p(M,E))^{(-1)^p}.
\end{equation*}
Define $\lambda(E):=(\det H^{\bullet}(M,E))^{-1}$. At the level of harmonic representatives, the $L^2$-metric $|\cdot|_{\Omega^{0,\bullet}(M,E)}$ induces the $L^2$-metric $|\cdot|_{\lambda(E)}$ on $\lambda(E)$.
\begin{definition}
The Quillen metric $||\cdot||_{\lambda(E)}$ on the complex line $\lambda(E)$ is defined by 
\begin{equation*}
||\cdot||_{\lambda(E)}=|\cdot|_{\lambda(E)}\times T(\omega,h_E).
\end{equation*}
\end{definition}
The Quillen metric $||\cdot||_{\lambda(E)}$ depends on the K\"ahler metric $\omega$ and $h_E$. The Quillen anomaly formula tells exactly how $||\cdot||_{\lambda(E)}$ changes when $\omega$ and $h_E$ vary.

\begin{thm}[Quillen anomaly formula, \protect{\cite[III, Theorem 1.23]{bismut1988analytic}}]
Let $\omega', \omega$ be the K\"ahler metrics on $M$ and $h', h$ be the Hermitian metrics on $E$. Let $||\cdot||'_{\lambda(E)}, ||\cdot||_{\lambda(E)}$ be the Quillen metrics associated to $(\omega',h'), (\omega,h)$, respectively. Then we have the following.
\begin{equation}\label{anomaly}
(2\pi)^n\log \frac{||\cdot||^{'2}_{\lambda(E)}}{||\cdot||^2_{\lambda(E)}}=i\int_M BC(\Td;\omega',\omega)\ch(R_E(h))+\Td(R_{T^{1,0}M}(\omega'))BC(\ch;h',h).
\end{equation}
\end{thm}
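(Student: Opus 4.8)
The plan is to prove the formula by a variational argument and then integrate. By the additivity (cocycle) property of Bott-Chern forms recorded in Proposition \ref{bc}(1), together with the evident additivity under concatenation of paths of the logarithm of the ratio of Quillen metrics, it suffices to fix a smooth path $(\omega_t,h_t)$, $t\in[0,1]$, joining $(\omega,h)$ to $(\omega',h')$, to show that the $t$-derivative of $(2\pi)^n\log\|\cdot\|^2_{\lambda(E),t}$ equals the $t$-derivative of the right-hand side, and then to integrate over $[0,1]$. After differentiation the right-hand side is exactly the transgression integrand appearing in the defining formula (\ref{homotop}) for $BC$, so all of the work lies in computing the left-hand derivative.

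First I would split
\[
\log\|\cdot\|^2_{\lambda(E)} = \log|\cdot|^2_{\lambda(E)} + \log T(\omega,h_E)^2 = \log|\cdot|^2_{\lambda(E)} - \zeta_E'(0)
\]
and treat the $L^2$-part and the torsion part separately. The variation of the $L^2$-metric on $\lambda(E)$ is computed at the level of harmonic representatives via Hodge theory, and differentiating the harmonic projection produces a term supported on the finite-dimensional kernel. The torsion part is the analytic core: writing $\zeta_E(z) = -\mathcal{M}[\Tr_s[N e^{-u\square_E}P^\bot]]$, I would differentiate under the Mellin transform (using its meromorphic continuation), apply Duhamel's formula to express $\frac{d}{dt}e^{-u\square_E}$ through $\frac{d}{dt}\square_E$ (a zeroth-order operator built from $\dot{h}_t h_t^{-1}$ and the variation of $\omega_t$), and integrate by parts in $u$. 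These manipulations reduce $\frac{d}{dt}\zeta_E'(0)$ to the finite ($u^0$) coefficient in the small-time expansion of a supertrace $\Tr_s[A\, e^{-u\square_E}]$, with $A$ encoding the metric variation, plus a kernel term that cancels the one coming from the $L^2$-metric variation.

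The main obstacle, and the deepest input, is the identification of that $u^0$ coefficient. By the local index theorem in Bismut's heat-kernel form (Getzler-type rescaling), the fibrewise supertrace of the heat kernel converges as $u\to 0$ to the local index density, namely the Chern-Weil representative $\Td(R_{T^{1,0}M})\ch(R_E)$; inserting the variation operator $A$ promotes this to the corresponding first-order-in-$t$ transgressed density. This is precisely where the characteristic forms $\Td$ and $\ch$ emerge from the spectral data, and it carries the bulk of the analysis—uniform heat-kernel estimates and the rescaling argument. I would invoke it as a black box rather than reprove it.

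Finally I would assemble the pieces. The kernel contributions from the $L^2$ and torsion variations cancel, leaving the local index density paired against the metric variation. Splitting that variation into its $E$-component and its $T^{1,0}M$-component and matching each against the Leibniz expansion of $\frac{d}{dt}\big[\Td(R_{T^{1,0}M,t})\ch(R_E(h_t))\big]$, I recognize the two transgression integrands that define $\frac{d}{dt}BC(\Td;\omega_t,\omega)$ and $\frac{d}{dt}BC(\ch;h_t,h)$ through (\ref{homotop}). Integrating over $t\in[0,1]$ then gives the stated identity, while the overall factor $i$ and the $(2\pi)^n$ normalization are bookkeeping dictated by the sign and $2\pi$ conventions fixed for $BC$ and for the coefficients $a_j$ in this paper.
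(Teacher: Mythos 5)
This theorem is not proved in the paper at all: it is imported verbatim from Bismut--Gillet--Soul\'e \cite[III, Theorem 1.23]{bismut1988analytic}, so there is no internal proof to compare against, and the honest benchmark is the original argument. Your outline reproduces the correct overall skeleton of that argument (differentiate along a path of metrics, split the Quillen metric into its $L^2$-part and its torsion part, use Duhamel and small-time heat-kernel asymptotics to identify $\frac{d}{dt}\zeta_E'(0)$ with a local density, then integrate back up), but as a proof it has a genuine gap at its central step. The quantity being differentiated is $\Tr_s[N e^{-u\square_E}P^\bot]$ with the number operator $N$ inserted, whereas the local index theorem you invoke as a black box controls the small-$u$ behaviour of $\Tr_s[e^{-u\square_E}]$, or of $\Tr_s[A\,e^{-u\square_E}]$ for an inserted endomorphism $A$ --- not of supertraces weighted by $N$. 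The insertion of $N$ destroys the supersymmetric cancellation that makes the plain supertrace $u$-independent; this is precisely why the analytic torsion is a nontrivial invariant. The actual proof must first eliminate $N$, using the commutation relations $[N,\bp_E]=\bp_E$, $[N,\bp_E^*]=-\bp_E^*$ together with Duhamel to rewrite $\frac{d}{dt}\Tr_s[Ne^{-u\square_E}]$ as $u\frac{\p}{\p u}\Tr_s[\alpha_t e^{-u\square_E}]$ for an explicit endomorphism $\alpha_t$ built from $\dot h_t h_t^{-1}$ and $\dot\omega_t$; only then does a local index theorem with coefficients apply, and even then the identification of the resulting $u^0$-density with exactly the transgressed $\Td\cdot\ch$ form --- rather than some other universal polynomial in the curvatures --- is the entire content of the theorem and cannot be waved through. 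The variation in $\omega$ is substantially harder than the variation in $h_E$, since it changes the metric on all of $\Omega^{0,\bullet}(M,E)$ rather than giving a zeroth-order twist, and Bismut--Gillet--Soul\'e handle it with the superconnection and double-transgression formalism rather than a naive Duhamel expansion.

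A secondary gap: you assert that the harmonic (kernel) contributions from the $L^2$-metric variation and from the torsion variation cancel. They do, and this cancellation is exactly why the Quillen metric, rather than either factor separately, is the natural smooth object; but it requires tracking $P^\bot$ through the Duhamel expansion and is not automatic. None of this means your strategy is wrong --- it is the strategy --- but at the level of detail given, every step that could fail has been delegated to a black box, so the proposal is a roadmap to the Bismut--Gillet--Soul\'e proof rather than a proof.
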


Now let $(L,h)$ be a Hermitian line bundle with $R(h)=-i\omega$. Note that $\omega$ determines $h$ up to a multiplicative constant. In \cite{vasserot1989asymptotics}, Bismut-Vasserot obtained the leading order asymptotics of $\log T(\omega,h^k)$ as $k\rightarrow\infty$, and recently Finski \cite{finski2018full} established the full asymptotic expansion in terms of local coefficients. 

\begin{thm}[\protect{\cite[Theorem 1.1]{finski2018full}}]\label{Finski}
There are local coefficients $\a_j, \b_j\in\mathbb{R}, j\in\mathbb{N}$ such that for any $s\in\mathbb{N}$, as $k\rightarrow\infty$,
\begin{equation*}
-2(2\pi)^n\log T(\omega,h^k)=\sum_{j=0}^{s}k^{n-j}(\a_j\log k + \b_j(\omega)) + o(k^{n-s}).
\end{equation*}
Moreover, the coefficients $\a_j$ do not depend on $\omega$.
\end{thm}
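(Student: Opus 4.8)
The plan is to attack the definition of the torsion directly through the heat semigroup. Writing $-2\log T(\omega,h^k)=\zeta_{L^k}'(0)$ and recalling $\zeta_{L^k}(z)=-\mathcal{M}\bigl[\Theta_k\bigr](z)$ with
\be
\Theta_k(u):=\Tr_s\bigl[N\,e^{-u\square_{L^k}}P^\bot\bigr],
\ee
the whole problem reduces to understanding $\Theta_k(u)$ as a function of both the heat time $u$ and the semiclassical parameter $k$, and then transplanting its joint asymptotics to $z=0$. The natural time scale is dictated by the spectral gap: by the Bismut--Vasserot estimate the spectrum of $\square_{L^k}$ on $\Omega^{0,\bullet}(M,L^k)$ is contained in $\{0\}\cup[ck,\infty)$ for $k$ large, so I would rescale $u=v/k$. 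This produces the clean factorization $\zeta_{L^k}(z)=-k^{-z}\,\mathcal{M}\bigl[\Theta_k(\cdot/k)\bigr](z)$, and it is precisely the prefactor $k^{-z}$ that, upon differentiating at $z=0$, will manufacture the $\log k$ terms.

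First I would dispose of the large-time regime. After the rescaling, the range $v\geq v_0$ is governed by the gap: since $P^\bot$ removes the kernel and the remaining spectrum exceeds $ck$, the tail $\int_{v_0}^\infty$ is exponentially small in $k$ and is absorbed into the $o(k^{n-s})$ remainder. Everything then hinges on the rescaled heat kernel for small and intermediate $v$.

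The heart of the matter is the on-diagonal, all-orders expansion of the rescaled heat kernel. Following Bismut's local index theory, I would trivialize $L$ in normal coordinates at each point, dilate the spatial variables by $\sqrt{k}$, and show that $\tfrac1k\square_{L^k}$ converges to the model harmonic oscillator on $\mathbb{C}^n\otimes\Lambda^{\bullet}$ whose heat kernel is explicit via Mehler's formula. Pushing the expansion to all orders should give, uniformly in $v$, a diagonal expansion
\be
\Theta_k(v/k)\sim\sum_{j\geq 0}k^{n-j}\,c_j(v),
\ee
where each $c_j$ is smooth, rapidly decaying as $v\to\infty$, with a prescribed small-$v$ expansion inherited from the model. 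Inserting this into the Mellin transform yields $\zeta_{L^k}(z)=-k^{-z}\sum_{j}k^{n-j}\mathcal{M}[c_j](z)$, whence
\be
\zeta_{L^k}'(0)=\sum_{j}k^{n-j}\bigl(\log k\,\mathcal{M}[c_j](0)-\mathcal{M}[c_j]'(0)\bigr),
\ee
so that $\alpha_j=(2\pi)^n\mathcal{M}[c_j](0)$ and $\beta_j(\omega)=-(2\pi)^n\mathcal{M}[c_j]'(0)$, the value $\mathcal{M}[c_j](0)$ being simply the constant term in the small-$v$ expansion of $c_j$.

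It remains to see that $\alpha_j$ is independent of $\omega$. The cleanest route is the Quillen anomaly formula (\ref{anomaly}): the variation of $-2(2\pi)^n\log T$ under a deformation of the \kahler metric $\omega$, together with the induced variation of the $L^2$ metric on $\lambda(L^k)$, is expressed through integrals of Bott--Chern and characteristic forms built from $\ch(R_{L^k})$ and $\Td(R_{T^{1,0}M})$, all of which are \emph{polynomial} in $k$ and carry no $\log k$. Hence the coefficients of $\log k$ are invariant under deformation of $\omega$, so $\alpha_j$ is constant; equivalently, $\mathcal{M}[c_j](0)$ is the integral of a universal index density, a topological quantity. The main obstacle is not this structural point but the uniform analysis behind the expansion of $\Theta_k(v/k)$: one must establish Gaussian off-diagonal bounds (via finite propagation speed) and control the Bismut rescaling remainders \emph{uniformly} over the whole range $v\in(0,\infty)$, so that the $v\to0$ heat expansion and the $k\to\infty$ semiclassical expansion may be interchanged and integrated term by term against $v^{z-1}$.
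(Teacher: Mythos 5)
The paper does not prove this statement: it is imported verbatim as Finski's Theorem 1.1 (with the Bismut--Vasserot result as the leading-order precursor), so there is no in-paper argument to compare against; the only thing the paper adds is Remark \ref{Finskirmk} on the behaviour of $\beta_j$ under biholomorphisms. Your sketch reconstructs the strategy of the cited literature correctly: the rescaling $u=v/k$ justified by the $O(k)$ spectral gap, the factorization $\zeta_{L^k}(z)=-k^{-z}\mathcal{M}[\Theta_k(\cdot/k)](z)$ with the $k^{-z}$ prefactor manufacturing the $\log k$ terms, the exponential smallness of the large-time tail, the identification of $\mathcal{M}[c_j](0)$ with the constant term of the small-$v$ expansion, and the local index-theoretic comparison with the Mehler model are all sound, and your anomaly-formula argument for the $\omega$-independence of $\alpha_j$ is a legitimate (non-circular) shortcut, since the $L^2$-metric ratio has a $\log k$-free expansion by the Bergman kernel argument of Section \ref{sectiondpp}. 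The caveat is that essentially the entire technical content of Finski's proof lies in the step you defer to the end: establishing the two-parameter expansion $\Theta_k(v/k)\sim\sum_j k^{n-j}c_j(v)$ \emph{uniformly} in $v\in(0,\infty)$, with remainders good enough to be integrated against $v^{z-1}$ and differentiated at $z=0$ (this requires the full off-diagonal heat kernel expansions of Dai--Liu--Ma type, not just the on-diagonal leading model). As written, your proposal is a correct plan that names this obstacle rather than a proof that overcomes it.
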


Here, the local coefficient means that it can be expressed as an integral of a density defined locally over $M$. More precisely, $\b_j$ is given by $-\mathcal{M}[\int_M \Tr_s [N{a}_{j,u}(x)]\frac{\omega^n}{n!}]'(0)$ where ${a}_{j,u}$ is given by universal polynomial in $R(\omega)$ and its derivatives \cite[Theorem 1.2, 4.17]{dai2006asymptotic}. Thus, if we change the K\"ahler form $\omega$ by the biholomorphism of $M$, $\b_j$ remains the same. That is, for $f\in Aut(M)$, $\b_j(\omega)=\b_j(f^*\omega)$ where $\b_j$ is understood as a formal expression. From the explicit formula for $\b_0$ \cite[Theorem 8]{vasserot1989asymptotics} and $\b_1$ \cite[Theorem 1.3]{finski2018full}, we observe that $\b_0$ and $\b_1$ only depend on $[\omega]$. For notational convenience, set $\b_{-1}:=0$.

\begin{remark}\label{Finskirmk}
Indeed for $f\in Aut(M)$, one can verify that $\square_{L^k} = (f^{-1})^* \circ\square_{f^* L^k}\circ f^*$, where $\square_{f^* L^k}$ is defined with respect to $f^* \omega$ and $f^* h^k$. Thus, the spectra of the two Laplacians, and consequently the analytic torsions corresponding to $\omega$ and $f^* \omega$, coincide. Since both analytic torsions admit asymptotic expansions by Theorem \ref{Finski}, the coefficients of these expansions, $\b_j(\omega)$ and $\b_j(f^*\omega)$, must be identical. The point of the above argument using the local nature of $\b_j$ is that it does not involve the polarizing line bundle $L$.
\end{remark}

\section{Asymptotic expansion of the DPP partition function}\label{sectiondpp}

We start with the basic definitions regarding the determinantal point processes associated with $(L^k,h^k)$ for each $k\in\mathbb{N}$, with $\omega=iR(h)$ being a K\"ahler form. Fix a basis $(\psi_i^k)_{i=1}^{d_k}$ of $H^0(M,L^k)$. Denote by $\Psi^k$ the holomorphic section of $(L^k)^{\boxtimes d_k}$ over $M^{d_k}$ defined by the Slater determinant
\begin{equation*}
\Psi^k(z_1,\cdots,z_{d_k}):=\frac{1}{\sqrt{d_k}}\det \left(\psi^k_i(z_j)\right)_{i,j}.
\end{equation*}

Physically, $\Psi^k$ represents a collective wave function of free $d_k$ fermions on the lowest Landau level. Note that $\Psi^k$ can be identified as an element of $\det H^0(M,L^k)$, and $L^2$-metrics of $H^0(M^{d_k},(L^k)^{\boxtimes d_k})$ and $\det H^0(M,L^k)$ are related by
\begin{equation}\label{Gram}
||\Psi^k||^2_{H^0(M^{d_k},(L^k)^{\boxtimes d_k})}=\det(\langle \psi^k_i,\psi^k_j \rangle_{H^0(M,L^k)})_{i,j}.
\end{equation}
\begin{definition}
The determinantal point process (DPP) associated to $(L^k, h^k)$ is a probability measure $\mu_{k,\omega}$ on $M^{d_k}$ defined as
\begin{equation*}
\mu_{k,\omega}:=\frac{1}{Z_k[h]}|\Psi^k(z_1,\cdots,z_{d_k})|^2_{(h^k)^{\boxtimes d_k}}\prod_{i=1}^{d_k}\frac{\omega^n}{n!},
\end{equation*}
where $Z_k[h]$ is the normalization constant given by
\begin{equation*}
Z_k[h]=\int_{M^{d_k}}|\Psi^k(z_1,\cdots,z_{d_k})|^2_{(h^k)^{\boxtimes d_k}}\prod_{i=1}^{d_k}\frac{\omega^n}{n!} = \det(\langle \psi^k_i,\psi^k_j \rangle_{H^0(M,L^k)})_{i,j}
\end{equation*}
by virtue of (\ref{Gram}). $Z_k[h]$ is called the partition function of $\mu_{k,\omega}$, and $\log Z_k[h]$ is called the generating functional of $\mu_{k,\omega}$. Note that although $Z_k[h]$ and $|\cdot|^2_{(h^k)^{\boxtimes d_k}}$ depend on $h$, the DPP $\mu_{k,\omega}$ only depends on $\omega$.
\end{definition}

Now fix K\"ahler form $\omega$ in $\cc_1(L)$ and Hermitian metric $h$ on $L$ so that $\omega=iR(h)$. Hermitian metrics $h_\varphi$ on $L$ can be identified with smooth functions $\varphi\in C^\infty(M,\mathbb{R})$ by $h_\varphi:=he^{-\varphi}$. Then the associated Chern curvature form satisfies $iR(h_\varphi)=\omega+i\p\bp\varphi$. From now on, we will use $\varphi$ instead of $h_\varphi$ to denote the Hermitian metrics on $L$ (e.g., $Z[0]=Z[h]$).

Denote by $\mathcal{K}_{\omega}$ the space of K\"ahler potentials for K\"ahler metrics in $\cc_1(L)$. That is,
\begin{equation*}
\mathcal{K}_{\omega}=\{ \varphi\in C^{\infty}(M,\mathbb{R}) : \omega_{\varphi}:=\omega+i\p\bp\varphi>0 \}.
\end{equation*}
By the $\p\bp$-lemma, two K\"ahler potentials define the same metric if and only if they differ by an additive constant. Thus, the space of K\"ahler metrics $\mathcal{K}_0$ on $M$ in $\cc_1(L)$ can be identified with $\mathcal{K}_\omega/\mathbb{R}$, where $\varphi_2\sim\varphi_1$ if and only if $\varphi_2=\varphi_1+c$ for some constant $c$. Note that $\mathcal{K}_{\omega}$, and hence $\mathcal{K}_{0}$, are simply connected.

A different choice of basis $(\psi_i^k)_{i=1}^{d_k}$ changes $\log Z_k[\varphi]$ by the addition of a constant. However, the difference $\log Z_k[\varphi_2]-\log Z_k[\varphi_1]$ is well-defined for two K\"ahler potentials $\varphi_2, \varphi_1$ in $\mathcal{K}_\omega$. We will compute the large $k$ asymptotics of $\log Z_k[\varphi]-\log Z_k[0]$ in two ways, following \cite{klevtsov2017quantum} and \cite{shen2025geometric}.

First, $\log Z_k[\varphi]$ is essentially the $\mathcal{L}$-functional introduced in \cite{donaldson2005scalar}. The following lemma is due to Donaldson: 
\begin{lem}[\protect{\cite[Lemma 2]{donaldson2005scalar}}]
The derivative of $\log Z_k[\varphi]$ on $\mathcal{K}_\omega$ is given by
\begin{equation}\label{varD}
\delta \log Z_k[\varphi] = \int_M \delta\varphi(\Delta_{\varphi}\rho_k(\omega_{\varphi})-k\rho_k(\omega_{\varphi}))\frac{\omega_{\varphi}^n}{n!}.
\end{equation}
\end{lem}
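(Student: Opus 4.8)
The goal is to compute the first variation of $\log Z_k[\varphi]$ on the space of \kahler potentials $\mathcal{K}_\omega$, and the plan is to differentiate the explicit Gram-determinant formula
\be
Z_k[\varphi]=\det\(\langle \psi^k_i,\psi^k_j \rangle_{h^k_\varphi}\)_{i,j}
\ee
directly. Since $\log\det$ of a positive matrix has derivative $\Tr(G^{-1}\dot G)$, where $G=(G_{ij})=(\langle \psi^k_i,\psi^k_j\rangle)$ is the Gram matrix, I would first write $\delta \log Z_k[\varphi]=\Tr(G^{-1}\delta G)$ and then compute $\delta G_{ij}$ by varying the inner product $\langle \psi^k_i,\psi^k_j\rangle=\int_M h^k_\varphi(\psi^k_i,\psi^k_j)\frac{\omega_\varphi^n}{n!}$. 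Two things vary simultaneously with $\varphi$: the pointwise Hermitian pairing $h^k_\varphi=h^k e^{-k\varphi}$, whose variation brings down a factor $-k\,\delta\varphi$, and the volume form $\frac{\omega_\varphi^n}{n!}$, whose variation I would compute using $\delta\omega_\varphi=i\p\bp\,\delta\varphi$ so that $\delta\(\frac{\omega_\varphi^n}{n!}\)=i\p\bp\,\delta\varphi\wedge\frac{\omega_\varphi^{n-1}}{(n-1)!}=\(\Delta_\varphi\,\delta\varphi\)\frac{\omega_\varphi^n}{n!}$.

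The key step is to reorganize $\Tr(G^{-1}\delta G)$ into an integral over $M$ against the Bergman kernel on the diagonal. After substituting the two variations, $\delta G_{ij}$ becomes an integral over $M$ of $\delta\varphi\,(\Delta_\varphi-k)$ acting (in the appropriate sense) on the pointwise norms, and contracting with $G^{-1}$ reconstructs precisely the diagonal Bergman kernel. Concretely, I would pass to an $L^2$-orthonormal basis $(S^k_i)$ so that $G$ becomes the identity and the trace collapses to $\sum_i$ of the varied diagonal terms; since $\rho_k(\omega_\varphi)=\sum_i \|S^k_i\|^2_{h^k_\varphi}$ by definition, the sum of pointwise norms assembles into $\rho_k(\omega_\varphi)$. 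This yields
\be
\delta\log Z_k[\varphi]=\int_M \delta\varphi\(\Delta_\varphi\rho_k(\omega_\varphi)-k\rho_k(\omega_\varphi)\)\frac{\omega_\varphi^n}{n!},
\ee
matching \eqref{varD}.

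The main obstacle is the careful bookkeeping in reducing $\Tr(G^{-1}\delta G)$ to $\rho_k$: one must verify that the change-of-basis to an orthonormal frame does not introduce spurious terms (it does not, because $\Tr(G^{-1}\delta G)$ is basis-independent by the cyclic property of the trace and because $\rho_k$ is itself independent of the chosen orthonormal basis), and that the integration by parts implicit in moving $\Delta_\varphi$ onto $\rho_k$ is justified on the compact manifold $M$ with no boundary terms. The self-adjointness of $\Delta_\varphi$ with respect to $\frac{\omega_\varphi^n}{n!}$ is what allows the volume-variation term $(\Delta_\varphi\,\delta\varphi)\,\rho_k$ to be rewritten, after integration by parts, as $\delta\varphi\,(\Delta_\varphi\rho_k)$. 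Since this is Donaldson's lemma and the statement is attributed directly to \cite{donaldson2005scalar}, I expect the proof to be a short and essentially formal calculation; the only genuine care needed is in the two simultaneous sources of variation and the sign conventions for $\Delta_\varphi$ as fixed in \eqref{lower}.
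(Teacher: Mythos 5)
Your proof is correct and is essentially the same argument the paper relies on: the paper gives no proof of its own, quoting the result as \cite[Lemma 2]{donaldson2005scalar}, and that proof is exactly the Gram-determinant computation you describe --- differentiate $\log\det G$ via $\Tr(G^{-1}\delta G)$, account for the two sources of variation ($-k\,\delta\varphi$ from $h^k e^{-k\varphi}$ and $(\Delta_\varphi\delta\varphi)\,\omega_\varphi^n/n!$ from the volume form), evaluate the trace in a basis orthonormal at the given $\varphi$ so the diagonal terms assemble into $\rho_k(\omega_\varphi)$, and integrate by parts using self-adjointness of $\Delta_\varphi$. Your attention to basis-independence and to the fact that the orthonormalization is performed only at the point of differentiation is exactly the right bookkeeping.
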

Choose any smooth path $\varphi_t$ in $\mathcal{K}_\omega$ joining $0$ to $\varphi$ and let $\omega_t:=\omega_{\varphi_t}$. We use subscript to denote objects associated with $\omega_t, \omega_\varphi$, etc. Using asymptotic expansion (\ref{Basymp}) to integrate (\ref{varD}), we get a large $k$ asymptotics
\begin{equation*}
(2\pi)^n\log \frac{Z_k[\varphi]}{Z_k[0]}=k^{n+1}\int_0^1 dt\int_M \dot{\varphi_t}(-a_{0}(\omega_t))\frac{\omega_t^n}{n!}+k^{n}\int_0^1 dt\int_M \dot{\varphi_t}(\Delta_t a_{0}(\omega_t)-a_{1}(\omega_t))\frac{\omega_t^n}{n!}+\cdots,
\end{equation*}
where the coefficient of the $k^{n+1-j}$-term is given by 
\begin{equation}\label{1}
\int_0^1 dt\int_M \dot{\varphi_t}(\Delta_t a_{j-1}(\omega_t)-a_{j}(\omega_t))\frac{\omega_t^n}{n!}.
\end{equation}

Alternatively, by the Kodaira-Serre vanishing theorem, the higher cohomology of $L^k$ vanishes for $k\gg1$ and $\log\frac{Z_k[\varphi]}{Z_k[0]}$ becomes a log-ratio of $L^2$-metrics on $\lambda(L^k)$. That is,
\begin{equation}\label{Qu}
\log\frac{Z_k[\varphi]}{Z_k[0]}=\log\frac{|\cdot|^2_{\lambda(L^k),\omega}}{|\cdot|^2_{\lambda(L^k),\omega_\varphi}}=\log\frac{||\cdot||^2_{\lambda(L^k),\omega}}{||\cdot||^2_{\lambda(L^k),\omega_\varphi}}+2\log \frac{T(\omega_\varphi,h^k_\varphi)}{T(\omega,h^k)},
\end{equation}
for $k\gg1$. The large $k$ asymptotics of the first term of the RHS of (\ref{Qu}) is given by the Quillen anomaly formula (\ref{anomaly}), and for the second term it is given by Theorem \ref{Finski}. As a result, we get an asymptotic expansion of $(2\pi)^n\log \frac{Z_k[\varphi]}{Z_k[0]}$ whose $k^{n+1-j}$-term coefficient is given by
\begin{align}\label{2}
&-i\int_M BC(\Td_j;\omega_\varphi,\omega)\frac{\omega^{n+1-j}}{(n+1-j)!}+\Td_{j}(R_\varphi)\frac{-i}{(n+1-j)!}\sum^{n-j}_{s=0}\varphi \omega_{\varphi}^s\wedge \omega^{n-j-s} \nonumber \\
&\qquad\, +\b_{j-1}(\omega)-\b_{j-1}(\omega_\varphi),
\end{align}
using (\ref{bcch}) to express $BC(\ch;\cdot,\cdot)$ explicitly. Here, the integrands of the first and second terms are understood to be zero for $j>n+1, j>n$, respectively. Combining (\ref{1}) and (\ref{2}), we get the following.

\begin{thm}[Asymptotics of the Partition Functions, Version I]
$(2\pi)^n\log \frac{Z_k[\varphi]}{Z_k[0]}$ admits the following form of asymptotic expansion as $k\rightarrow\infty$:
\begin{equation}\label{ver1}
(2\pi)^n\log \frac{Z_k[\varphi]}{Z_k[0]}=k^{n+1}\widetilde{S_0}[\varphi,0]+k^{n}\widetilde{S_1}[\varphi,0]+k^{n-1}\widetilde{S_2}[\varphi,0]+\cdots,
\end{equation}
where $\widetilde{S_j}[\varphi,0]$ is given by (\ref{1}) and (\ref{2}). $\widetilde{S_j}[\cdot,\cdot]:\mathcal{K}_\omega\times\mathcal{K}_\omega\rightarrow\mathbb{R}$ satisfies
\begin{enumerate}
    \item Cocyle identity : for any three K\"ahler potentials $\varphi_2, \varphi_1, \varphi_0$ in $\mathcal{K}_\omega$,
    \begin{align*}
\widetilde{S_j}[\varphi_1,\varphi_0]&=-\widetilde{S_j}[\varphi_0,\varphi_1],\\   
\widetilde{S_j}[\varphi_2,\varphi_0]&=\widetilde{S_j}[\varphi_2,\varphi_1]+\widetilde{S_j}[\varphi_1,\varphi_0];
    \end{align*}
    \item The derivative of $\widetilde{S_j}[\varphi,0]$ on $\mathcal{K}_\omega$ is given by
\begin{equation}\label{firstvariver1}
\delta_\varphi \widetilde{S_j}[\varphi,0]=\int_M \delta\varphi \left( \Delta_\varphi a_{j-1}(\omega_\varphi)-a_{j}(\omega_\varphi) \right) \frac{\omega_\varphi^n}{n!};
\end{equation}
    \item For $j>n+1$, $\widetilde{S_j}[\varphi_2,\varphi_1]$ is an exact cocyle. That is, it can be written as a difference
\begin{equation*}
\widetilde{S_j}[\varphi_2,\varphi_1]=\widetilde{s_j}[\omega_2]-\widetilde{s_j}[\omega_1],
\end{equation*}
for a local functional $\widetilde{s_j}[\cdot] : \mathcal{K}_0\rightarrow\mathbb{R}$ of K\"ahler forms.
\end{enumerate}
\end{thm}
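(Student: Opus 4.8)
The plan is to take as the working definition of the two-variable functional the coefficient extracted from the ratio of partition functions, and then to read off all three properties from the two derivations that precede the statement. Concretely, for $\varphi_2,\varphi_1\in\mathcal{K}_\omega$ I would set $\widetilde{S_j}[\varphi_2,\varphi_1]$ to be the coefficient of $k^{n+1-j}$ in the asymptotic expansion of $(2\pi)^n\log\frac{Z_k[\varphi_2]}{Z_k[\varphi_1]}$. This ratio is independent of the choice of basis $(\psi_i^k)$ and admits an expansion in two ways: either by integrating Donaldson's formula \eqref{varD} along a path from $\varphi_1$ to $\varphi_2$, which reproduces \eqref{1} with $\omega$ replaced by $\omega_{\varphi_1}$, or by rewriting it via the Quillen anomaly formula \eqref{anomaly} and Theorem \ref{Finski}, which reproduces \eqref{2}. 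Since an asymptotic expansion in $k$ is unique, these two closed-form expressions for the coefficient agree, and each property below may then be verified with whichever expression is more convenient.

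For the cocycle identity (part 1) I would argue purely algebraically. For every fixed $k$ the logarithms satisfy $\log\frac{Z_k[\varphi_1]}{Z_k[\varphi_0]}=-\log\frac{Z_k[\varphi_0]}{Z_k[\varphi_1]}$ and $\log\frac{Z_k[\varphi_2]}{Z_k[\varphi_0]}=\log\frac{Z_k[\varphi_2]}{Z_k[\varphi_1]}+\log\frac{Z_k[\varphi_1]}{Z_k[\varphi_0]}$. Matching the coefficients of $k^{n+1-j}$ on both sides, again using uniqueness of the expansion, transfers these identities verbatim to the $\widetilde{S_j}$ and yields the asserted antisymmetry and additivity.

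For the first-variation formula (part 2) I would differentiate the well-defined quantity $\log\frac{Z_k[\varphi]}{Z_k[0]}$ directly. As $Z_k[0]$ is fixed, its variation equals $\delta\log Z_k[\varphi]$, which by Donaldson's Lemma \eqref{varD} is $\int_M \delta\varphi(\Delta_\varphi\rho_k(\omega_\varphi)-k\rho_k(\omega_\varphi))\frac{\omega_\varphi^n}{n!}$. Substituting the Bergman expansion \eqref{Basymp} for $(2\pi)^n\rho_k$ and collecting the coefficient of $k^{n+1-j}$ — the term $\Delta_\varphi a_{j-1}$ arising from the Laplacian piece (shift $n-i=n+1-j$) and $a_j$ from the $k\rho_k$ piece (shift $n+1-i=n+1-j$) — gives precisely $\delta_\varphi\widetilde{S_j}[\varphi,0]=\int_M\delta\varphi(\Delta_\varphi a_{j-1}(\omega_\varphi)-a_j(\omega_\varphi))\frac{\omega_\varphi^n}{n!}$, which is \eqref{firstvariver1}. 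The one point needing care is that the variation of the coefficient equals the coefficient of the variation, i.e. that term-by-term differentiation of the expansion is legitimate; I would justify this from the manifest endpoint-dependence of \eqref{2}, which shows the coefficient is a genuine, path-independent function of $\varphi$ whose differential is read off at the endpoint.

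For the exactness statement (part 3) I would inspect \eqref{2} in the range $j>n+1$. There the first term vanishes, since $BC(\Td_j;\omega_\varphi,\omega)\wedge\omega^{n+1-j}$ carries the negative power $\omega^{n+1-j}$, and the second term already vanishes for $j>n$ because $\Td_j(R_\varphi)$ has degree exceeding $2n$; hence for $j>n+1$ only the torsion contribution survives and $\widetilde{S_j}[\varphi,0]=\b_{j-1}(\omega)-\b_{j-1}(\omega_\varphi)$. By Theorem \ref{Finski} and the discussion following it, $\b_{j-1}$ is a \emph{local} functional of the \kahler metric, namely an integral of a universal density in the curvature and its derivatives, so setting $\widetilde{s_j}[\omega]:=-\b_{j-1}(\omega)$ defines a local functional on $\mathcal{K}_0$, and part 1 gives $\widetilde{S_j}[\varphi_2,\varphi_1]=\widetilde{S_j}[\varphi_2,0]-\widetilde{S_j}[\varphi_1,0]=\widetilde{s_j}[\omega_2]-\widetilde{s_j}[\omega_1]$. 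The main obstacle in the whole argument is not any single computation but ensuring that the two independent expansions are genuinely compatible as functions of the endpoints and that the term-by-term operations — integration along the path in part 2 and coefficient matching in parts 1 and 3 — are valid; once uniqueness of the expansion and the locality of $\b_{j-1}$ are secured, the three properties follow formally.
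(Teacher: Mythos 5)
Your proposal is correct, and parts (2) and (3) coincide with the paper's argument: the first variation is read off from the path-integral expression \eqref{1} (equivalently, by differentiating Donaldson's formula \eqref{varD} and inserting the Bergman expansion), and exactness for $j>n+1$ comes from the fact that only the torsion coefficients $\b_{j-1}$ survive in \eqref{2} together with their locality from Theorem \ref{Finski}. Where you genuinely diverge is part (1): you obtain the cocycle identity ``for free'' by observing that $\log\frac{Z_k[\varphi_2]}{Z_k[\varphi_1]}$ satisfies the exact cocycle identity at every finite $k$ and then matching coefficients of the (unique) asymptotic expansion, whereas the paper proves it by a direct computation with the Bott--Chern expression: it shows that $\overline{S_j}[\varphi_1,\varphi_0]=\int_M BC(\Td_j;\omega_1,\omega_0)\ch_{n+1-j}(\varphi_0)+\Td_j(R_1)BC(\ch_{n+1-j};\varphi_1,\varphi_0)$ satisfies the cocycle identity using properties (1)--(2) of Proposition \ref{bc} and an integration by parts moving $\bp\p$ from one Bott--Chern form to the other. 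Your route is shorter and avoids any manipulation of secondary forms, but it is tied to the partition function and hence to the polarization of $[\omega]$; the paper's computation establishes the cocycle identity for the Bott--Chern functionals themselves, for an arbitrary invariant polynomial and without any polarization assumption, which is precisely what is recorded in Remark \ref{remarkcocycle} and exploited later (e.g.\ in the Appendix). So both arguments are valid, but the explicit computation buys strictly more than the statement of the theorem requires, and if you only argue via coefficient matching you would need a separate argument to recover that remark.
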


\begin{proof}
\textit{(1)} follows from (\ref{2}) and properties of Bott-Chern forms in Proposition \ref{bc}. More precisely, we only need to check that 
\begin{equation*}
\overline{S_j}[\varphi_1,\varphi_0]:=\int_M BC(\Td_j;\omega_1,\omega_0)\ch_{n+1-j}(\varphi_0)+\Td_{j}(R_1)BC(\ch_{n+1-j};\varphi_1,\varphi_0)
\end{equation*}
satisfies cocyle identity. For $\varphi_2, \varphi_1, \varphi_0$ in $\mathcal{K}_\omega$,
\begin{align*}
&\overline{S_j}[\varphi_2,\varphi_1]+\overline{S_j}[\varphi_1,\varphi_0]\\
=&\int_M BC(\Td_j;\omega_2,\omega_1)\ch_{n+1-j}(\varphi_1)+\Td_{j}(R_2)BC(\ch_{n+1-j};\varphi_2,\varphi_1) \nonumber \\
&\quad +BC(\Td_j;\omega_1,\omega_0)\ch_{n+1-j}(\varphi_0)+\Td_{j}(R_1)BC(\ch_{n+1-j};\varphi_1,\varphi_0)\\
=&\int_M BC(\Td_j;\omega_2,\omega_0)\ch_{n+1-j}(\varphi_0)+\Td_{j}(R_2)BC(\ch_{n+1-j};\varphi_2,\varphi_0) \nonumber \\
&\quad+BC(\Td_j;\omega_2,\omega_1)
\left(  \ch_{n+1-j}(\varphi_1)-\ch_{n+1-j}(\varphi_0) \right)\nonumber\\
&\quad-\left( \Td_{j}(R_2)-\Td_{j}(R_1) \right)BC(\ch_{n+1-j};\varphi_1,\varphi_0)\\
=&\overline{S_j}[\varphi_2,\varphi_0],
\end{align*}
where in the second identity property \textit{(1)} of Proposition \ref{bc} is used, and in the last identity property \textit{(2)} of Proposition \ref{bc} and integration by parts is used. That is,
\begin{equation*}
\int_M BC(\Td_j;\omega_2,\omega_1)\bp\p BC(\ch_{n+1-j};\varphi_1,\varphi_0)
=\int_M \bp\p BC(\Td_j;\omega_2,\omega_1)BC(\ch_{n+1-j};\varphi_1,\varphi_0). 
\end{equation*}

\textit{(2)} follows from (\ref{1}). \textit{(3)} follows from (\ref{2}) and the local nature of $\b_j$ from Theorem \ref{Finski}.
\end{proof}

\begin{remark}\label{remarkcocycle}
The proof shows that for any $\phi\in I_j(n)$, 
\begin{equation*}
\tilde{S}_{\phi}[\varphi_1,\varphi_0]:=\int_M BC(\phi;\omega_1,\omega_0)\frac{\omega_0^{n+1-j}}{(n+1-j)!}+\phi(R_1)\frac{-i}{(n+1-j)!}\sum^{n-j}_{s=0}(\varphi_1-\varphi_0) \omega_{1}^s\wedge \omega_0^{n-j-s}
\end{equation*}
satisfies cocycle identity without polarization assumption on $[\omega]$. Functionals of similar type were considered in \cite{donaldson1985anti,donaldson1987infinite,tian1999bott}. Note that $\tilde{S}_{\phi}$ can be easily modified to become a functional on the space of K\"ahler forms, as we now demonstrate.
\end{remark}

The coefficients $\widetilde{S_j}[\varphi,0]$ for $j\leq n$ are not geometric
functionals in the sense that they depend not only on the K\"ahler forms $\omega_\varphi$, but also on the K\"ahler potentials $\varphi$. But they do so in a trivial manner: for an arbitrary constant $c$, they satisfy
\begin{equation}\label{addconst}
\widetilde{S_j}[\varphi+c,0]=\widetilde{S_j}[\varphi,0]-c\int_M \Td_j(T^{1,0}M)\ch_{n-j}(L).
\end{equation}
This can be easily verified from either (\ref{1}) or (\ref{2}). We have an alternative expression for the asymptotics (\ref{ver1}) where each of the coefficients is given by a geometric functional in the sense that it only depends on K\"ahler forms. That is, we prove Theorem \ref{m1}. Let $V:=\int_M \omega^n/n!$ be a volume of $(M,\omega)$.

\begin{proof}[Proof of Theorem \ref{m1}]
By asymptotic Riemann-Roch-Hirzebruch formula (\ref{aRR2}), we can express (\ref{ver1}) in the following form :
\begin{equation*}
(2\pi)^n\log \frac{Z_k[\varphi]}{Z_k[0]}=kd_{k}(2\pi)^nS_0[\varphi,0]+k^{n}S_1[\varphi,0]+k^{n-1}S_2[\varphi,0]+\cdots,
\end{equation*}
where the coefficients are given by
\begin{equation*}
S_0[\varphi,0]:=\frac{1}{V}\widetilde{S_0}[\varphi,0]
\end{equation*}
and
\begin{equation}\label{defnofSj}
S_j[\varphi,0]:=\widetilde{S_j}[\varphi,0]-\frac{1}{V}\int_M \Td_j(T^{1,0}M)\ch_{n-j}(L) \times\widetilde{S_0}[\varphi,0]
\end{equation}
for $j>0$. By (\ref{addconst}), for an arbitrary constant $c$,
\begin{align*}
&S_j[\varphi+c,0]=\widetilde{S_j}[\varphi+c,0]-\frac{1}{V}\int_M \Td_j(T^{1,0}M)\ch_{n-j}(L) \times\widetilde{S_0}[\varphi+c,0]\\
&=\widetilde{S_j}[\varphi,0]-c\int_M \Td_j(T^{1,0}M)\ch_{n-j}(L)\nonumber\\
&\qquad-\frac{1}{V}\int_M \Td_j(T^{1,0}M)\ch_{n-j}(L) \times\widetilde{S_0}[\varphi,0]+c\int_M \Td_j(T^{1,0}M)\ch_{n-j}(L)\\
&=S_j[\varphi,0]
\end{align*}
for $j>0$. Thus, the functionals $S_j[\cdot,0],\, j>0$ on $\mathcal{K}_\omega$ descend to the functionals on K\"ahler metrics $\mathcal{K}_0=\mathcal{K}_\omega/\mathbb{R}$. Cocyle identity and exact cocycle property for $j>n+1$ follow from corresponding properties of $\widetilde{S_j}$, and \textit{(2)} follows from (\ref{defnofSj}).
\end{proof}

From (\ref{lower}) and (\ref{firstvar}), one can see that $S_0$ is (minus of) the Aubin-Yau functional $I$, and $S_1$ is the Mabuchi functional $\mathcal{M}$ in K\"ahler geometry. It can also be directly checked by comparing the explicit formula for $I, \mathcal{M}$ \cite{tian2012canonical} with (\ref{2}) and (\ref{defnofSj}). For $j=2$, since $\b_1$ only depends on $[\omega]$, $S_2$ is explicitly given by
\begin{align*}
S_2[\omega_\varphi,\omega]=&-i\int_M BC(\Td_2;\omega_\varphi,\omega)\frac{\omega^{n-1}}{(n-1)!}+\Td_{2}(R_\varphi)\frac{-i}{(n-1)!}\sum^{n-2}_{s=0}\varphi \omega_{\varphi}^s\wedge \omega^{n-2-s}\nonumber\\
&+\frac{i}{V}\left(\int_M \Td_2(T^{1,0}M)\frac{\omega^{n-2}}{(n-2)!}\right)\times\int_M \frac{-i}{(n+1)!}\sum^{n}_{s=0}\varphi \omega_{\varphi}^s\wedge \omega^{n-s}.
\end{align*}
By (\ref{lower}), its first variation is given by
\begin{equation}\label{firstliouville}
\delta_\varphi S_2[\omega_\varphi,\omega]=\int_M \delta\varphi\left(\widehat{a_2(\omega_\varphi)}+\frac{1}{6}\Delta_\varphi S_\varphi-\frac{1}{24}\left(|R_\varphi|^2-4|\ric_\varphi|^2+3S_\varphi^2\right) \right) \frac{\omega_\varphi^n}{n!}.
\end{equation}
When $n=1$ this reduces to
\begin{equation*}
\delta_\varphi S_2[\omega_\varphi,\omega]=\frac16\int_M \delta\varphi\Delta_\varphi R_\varphi \,\omega_\varphi,
\end{equation*}
which is the variation of the Liouville action restricted to fixed area metrics, written in terms of K\"ahler gauge \cite[(5.10)]{klevtsov2014random}. Therefore, $S_2$ extends the Liouville action to all dimensions. We will compute its second variation in Appendix \ref{appendix}.

\begin{remark}\label{klev}
In \cite[(6.6)]{klevtsov2014random}, Klevtsov explicitly computed the asysmptotics of $\log \frac{Z_k[\varphi]}{Z_k[0]}$ up to $j\leq 4$ in dimension $n=1$. Based on that, it was conjectured that for the general dimension $n$, the first $n+2$ coefficients are nontrivial action functionals (satisfying cocycle identity) and terms with $j>n+1$ are exact cocycles. Thus, Theorem \ref{m1} confirms this conjecture. It was also asked what is the relationship between $S_2$ and known functionals in K\"ahler geometry, such as Chen-Tian functional $E_1$ \cite{chen2002ricci}. In fact, from (\ref{firstliouville}), one can see that $S_2$ is a linear combination of the Chen-Tian functional $E_1$ and the Bando-Mabuchi functional $\mathcal{M}_2$ \cite{bando1986some}. By the results of \cite{bando1986some,chen2002ricci}, one can observe that $S_2$ integrates a Futaki-type holomorphic invariant. We will generalize this for all $j>0$ in the next section. 
\end{remark}

Note that (\ref{1}) and (\ref{2}) make sense as formal expressions for a nonpolarized K\"ahler class $[\omega]$. We expect that they are equal in general, without a polarization assumption. In particular, it would imply that the 1-forms defined on $\mathcal{K}_\omega$ by
\begin{equation}\label{1forms}
\gamma^{(j)}_{\varphi}(\psi):=\int_M \psi(\Delta_\varphi a_{j-1}(\omega_\varphi)-a_{j}(\omega_\varphi))\frac{\omega_\varphi^n}{n!},
\end{equation}
for $\psi\in T_{\varphi}\mathcal{K}_\omega=C^{\infty}(M,\mathbb{R})$, are closed. It is known that this holds for $j=0,1$ by \cite{mabuchi1986k}, and for $j=2$ by \cite{bando1986some,chen2002ricci}. We will verify these claims for $j=2$ in Appendix \ref{appendix} by direct computation.

\section{Holomorphic invariants from Bergman kernel asymptotics and critical points}\label{sectionFutaki}

In this section, we show that there is an obstruction to the existence of the critical points for each $S_j$, given by a holomorphic invariant introduced by Futaki \cite{futaki2004asymptotic}. Proof of Theorem \ref{m3} is divided into two part, Theorem \ref{futbar} and Theorem \ref{mainthm}. Denote by $\mathfrak{h}(M)$ the Lie algebra of holomorphic vector fields on $(M,\omega)$. For $X\in\mathfrak{h}(M)$, by Hodge theory, there is a unique harmonic $(0,1)$-form $\tau$ and function $\theta_X$ (called holomorphy potential) satisfying
\begin{equation}\label{hamilton}
\iota_X \omega = \tau -\bp \theta_X.
\end{equation}
Then $\theta_X$ is defined up to an additive constant, and $\tau$ will be assumed to be zero without loss of generality for our purposes. 

If $\theta_X$ is real valued, $\re X$ is a Hamiltonian vector field and $\frac{1}{2}\theta_X$ is a Hamiltonian function with respect to $\omega$, since
\begin{equation*}
\iota_{\re X}\omega=-\frac{1}{2}(\bp \theta_X + \p \overline{\theta_X})=-\frac{1}{2}d\theta_X.
\end{equation*}
That is, $\re X$ lies in a Lie algebra of Hamiltonian symplectomorphisms $\mathcal{G}$ of $(M,\omega)$. 

Consider $\nabla X = X^q_p \frac{\p}{\p z^q}\otimes dz^p$ as an $\End(T^{1,0}M)$-valued $0$-form on $M$, where
\begin{equation*}
X^q_p=\frac{\p X^q}{\p z^p}+\Gamma^{q}_{pr}X^r.
\end{equation*}
Denote by $\flat,\sharp$ the lowering and raising index operations by K\"ahler metric. For example, 
\begin{equation*}
(\overline{\nabla X})^{\flat,\sharp}=g_{p\br}\overline{X^{r}_{l}}g^{q\bl}\frac{\p}{\p z^q}\otimes dz^p.
\end{equation*}
Let $R$ be the curvature 2-form of $\omega$. We introduce the following notation:
\begin{equation*}
\overline{R}:=(R)^{\flat,\sharp}=R^{\bar{q}}{}_{\bar{p}r\bl}dz^r\wedge d\bar{z}^l,
\end{equation*}
which is $\End(T^{0,1}M)$-valued $2$-form on $M$.

First we identify the holomorphic invariants we will be dealing with. We begin with the lemma.
\begin{lem}\label{lulemm}
We have
\begin{equation}\label{nabx}
\iota_X R = -\bp \nabla X
\end{equation}
and 
\begin{equation}\label{nabx2}
\iota_{\overline{X}} \overline{R} = \p \overline{\nabla X}.
\end{equation}

\end{lem}
\begin{proof}
In local coordinates, 
\begin{align*}
\bp \nabla X &= \bp \left(X^q_p\right) = \frac{\p X^q_p}{\p \bz^l} d\bz^l 
=\left( \frac{\p^2 X^q}{\p z^p \p \bz^l} + \frac{\p \Gamma^{q}_{pr}}{\p \bz^l}X^r + \Gamma^{q}_{pr}\frac{\p X^r}{\p \bz^l} \right)d\bz^l\\
&=\frac{\p \Gamma^{q}_{pr}}{\p \bz^l}X^r d\bz^l = -R_{p}{}^{q}{}_{r\bl}X^rd\bz^l=-\iota_X R.
\end{align*}
Similarly, 
\begin{equation*}
\p \overline{\nabla X}=\p \left(\overline{X^q_p}\right)=\frac{\p \overline{\Gamma^{q}_{pl}}}{\p z^r}\overline{X^l} dz^r=\overline{-R_{p}{}^{q}{}_{l\br}X^l}dz^r=-R^{\bar{q}}{}_{\bar{p}r\bl}\overline{X^l} dz^r = \iota_{\overline{X}} \overline{R}
\end{equation*}
where we used the identity 
\begin{equation*}
\overline{R_{i\bj k\bl}}=R_{j\bi l\bk}.
\end{equation*}
\end{proof}

Using the lemma, we can prove the invariance of the following invariants.

\begin{thm}\label{futbar}
Under the same notations as above, for $j\geq 0$, 
\begin{equation}\label{FX}
\widetilde{F_j}(\omega,X):=\int_M \Td_j(R+\nabla X)(\omega+\theta_X)^{n+1-j} 
\end{equation}
and
\begin{equation}\label{FbarX}
\widetilde{F_j}(\omega,\overline{X}):=\int_M \Td_j(R+(\overline{\nabla X})^{\flat,\sharp})(\omega-\overline{\theta_X})^{n+1-j} 
\end{equation}
are independent of the choice of the K\"ahler metric in $\cc_1(L)$, under the normalization of $\theta_X$ by $\int_M \theta_X \omega^n =0$. 
\end{thm}

\begin{proof}
We first prove the claim for (\ref{FX}). The proof for (\ref{FbarX}) is exactly the same, as we will explain.

Let $\omega_t=\omega+i\p\bp \varphi_t$ be an arbitrary one-parameter family of K\"ahler forms in $[\omega]$ with $\omega_0=\omega$. We will show that $\frac{d}{dt}\widetilde{F_j}(\omega_t,X)=0$. Let $\theta_t$ be a holomorphy potential of $X$ with respect to $\omega_t$, that is, $\iota_X \omega_t = -\bp \theta_t$. Setting $\a_t:=-i\p\dot{\varphi_t}$, we have $\dot{\omega_t}=i\p\bp\dot{\varphi_t}=\bp \a_t$.
Since 
\begin{equation*}
\iota_X \omega_t=\iota_X \omega + i\iota_X\p\bp\varphi_t=-\bp\theta_0+i\bp\iota_X (\p\varphi_t), 
\end{equation*}
we have $\theta_t=\theta_0-i\iota_X \p\varphi_t$ and thus $\dot{\theta_t}=\iota_X \a_t$ up to an additive constant.
We used the fact that $X\in\mathfrak{h}(M)$. Also, by torsion freeness of the Levi–Civita connection we have
\begin{equation*}
\dot{\left(\nabla X \right)}=\dot{X^q_p}=\dot{\Gamma^{q}_{pr}}X^r= \iota_X \dot{\Gamma_t},
\end{equation*}
where $\Gamma_t$ denotes the Levi–Civita connection $1$-form for $\omega_t$. Note that $\dot{\Gamma_t}$ is globally well defined and $\dot{R_t}=\bp\dot{\Gamma_t}$. Using these, we compute $\frac{d}{dt}\widetilde{F_j}(\omega_t,X)$ (we suppress the subscript $t$):  
\begin{align}\label{5}
&\frac{d}{dt}\widetilde{F_j}(\omega_t,X)=\int_M j\Td_j (\dot{\nabla X}+\dot{R}, \nabla X + R,\cdots)(\omega+\theta)^{n+1-j}\nonumber\\
&\qquad\qquad\qquad\quad+(n+1-j)\Td_j(\nabla X + R)(\dot{\theta}+\dot{\omega})(\omega+\theta)^{n-j} \nonumber\\
&=\int_M j\Td_j (\iota_X \dot{\Gamma}+\bp\dot{\Gamma}, \nabla X + R,\cdots)(\omega+\theta)^{n+1-j} \nonumber \\
&\qquad +(n+1-j)\Td_j(\nabla X + R)(\iota_X \a+\bp \a)(\omega+\theta)^{n-j}\nonumber\\
&=\int_M j\Td_j (\iota_X \dot{\Gamma}, \nabla X + R,\cdots)(\omega+\theta)^{n+1-j} \nonumber\\
&\qquad+j(j-1)\Td_j (\dot{\Gamma}, \bp\nabla X,\nabla X + R,\cdots)(\omega+\theta)^{n+1-j} \nonumber\\
&\qquad +j(n+1-j)\Td_j (\dot{\Gamma},\nabla X + R,\cdots)\bp\theta(\omega+\theta)^{n-j} \nonumber\\
&\qquad +(n+1-j)\Td_j(\nabla X + R)\iota_X \a (\omega+\theta)^{n-j} \nonumber\\
&\qquad -(n+1-j)\bp\Td_j(\nabla X + R) \a (\omega+\theta)^{n-j} \nonumber\\
&\qquad +(n+1-j)(n-j)\Td_j(\nabla X + R) \a \bp\theta(\omega+\theta)^{n-1-j},
\end{align}
where to get the last identity we used $\bp R=\bp\omega=0$ and integration by parts.

Now by definition of $\theta$ and Lemma \ref{lulemm}, we have
\begin{equation}\label{lem1}
\iota_X (\omega+\theta)=\iota_X \omega = -\bp\theta
\end{equation}
and
\begin{equation}\label{lem2}
\iota_X (\nabla X+ R)=\iota_X R = -\bp\nabla X.
\end{equation}
Using these, we compute
\begin{align}\label{6}
&\int_M \iota_X \left[j\Td_j(\dot{\Gamma},\nabla X+R,\cdots)(\omega+\theta)^{n+1-j}+(n+1-j)\Td_j(\nabla X+R)\a(\omega+\theta)^{n-j} \right]\nonumber\\
&=\int_M j\Td_j(\iota_X \dot{\Gamma},\nabla X+R,\cdots)(\omega+\theta)^{n+1-j}\nonumber\\
&\qquad-j(j-1)\Td_j(\dot{\Gamma},-\bp\nabla X,\nabla X+R,\cdots)(\omega+\theta)^{n+1-j}\nonumber\\
&\qquad -j(n+1-j)\Td_j (\dot{\Gamma},\nabla X+R,\cdots)(-\bp\theta)(\omega+\theta)^{n-j} \nonumber\\
&\qquad +j(n+1-j)\Td_j (-\bp\nabla X,\nabla X+R,\cdots)\a(\omega+\theta)^{n-j} \nonumber\\
&\qquad +(n+1-j) \Td_j (\nabla X+R) \iota_X \a (\omega+\theta)^{n-j} \nonumber\\
&\qquad -(n+1-j)(n-j)\Td_j (\nabla X+R) \a (-\bp\theta)(\omega+\theta)^{n-1-j}.
\end{align}
Since $\bp \Td_j(\nabla X + R)=j\Td_j(\bp\nabla X,\nabla X + R,\cdots)$, (\ref{5}) and (\ref{6}) are equal.

Hence we obtain for some differential form $\eta$,
\begin{equation*}
\frac{d}{dt}\widetilde{F_j}(\omega_t,X)=\int_M \iota_X \eta =0
\end{equation*}
by dimensional reason.

We now turn to the proof for (\ref{FbarX}). Note that 
\begin{equation*}
\Td_j(R+(\overline{\nabla X})^{\flat,\sharp})=\Td_j(\overline{R}+\overline{\nabla X}),
\end{equation*}
where on the right hand side trace is taken as $\End(T^{0,1}M)$.
Then as the proof for (\ref{FX}) shows, we only need to check the following identities:
\begin{equation*}
\dot{\overline{R}}=-\p \dot{\overline{\Gamma}};\quad \dot{\overline{\nabla X}}=\iota_{\overline{X}} \dot{\overline{\Gamma}};\quad \dot{\omega}=\p \overline{\alpha};\quad \dot{\overline{\theta}}=\iota_{\overline{X}}\overline{\alpha},
\end{equation*}
and
\begin{equation*}
\iota_{\overline{X}} (\omega-\overline{\theta})= -\p\overline{\theta};\quad 
\iota_{\overline{X}} (\overline{R}+\overline{\nabla X})=\p \overline{\nabla X}.
\end{equation*}
For example,
\begin{equation*}
\dot{\overline{R}}=\dot{R}^{\bar{q}}{}_{\bar{p}r\bl}dz^r\wedge d\bar{z}^l=\dot{\overline{R_{p}{}^{q}{}_{l \overline{r} } }}dz^r\wedge d\bar{z}^l=\overline{ -\p_{\br} \dot{\Gamma^{q}_{pl}}}dz^r\wedge d\bar{z}^l=-\p_r \dot{\overline{\Gamma^{q}_{pl}}}dz^r\wedge d\bar{z}^l=-\p\dot{\overline{\Gamma}}.
\end{equation*}
The proof then goes without changes, and one can show that the time derivative is equal to 
\begin{equation*}
\int_M \iota_{\overline{X}} \left[j\Td_j(\dot{\overline{\Gamma}},\overline{\nabla X}+\overline{R},\cdots)(\omega-\overline{\theta})^{n+1-j}-(n+1-j)\Td_j(\overline{\nabla X}+\overline{R})\overline{\a}(\omega-\overline{\theta})^{n-j} \right],
\end{equation*}
which is zero. Note the minus sign in front of the second term.
\end{proof}

\begin{remark}
The proof shows that for any $\phi\in I_j(n)$,
\begin{equation*}
\int_M \phi(R+\nabla X)(\omega+\theta_X)^{n+1-j}
\end{equation*}
is independent of the choice of the K\"ahler metric in $[\omega]$ without a polarization assumption on $[\omega]$. They are precisely holomorphic invariants introduced by Futaki \cite{futaki2004asymptotic}, generalizing various integral invariants, including the Futaki invariant \cite{futaki1983compact} and the Bando-Futaki invariant \cite{bando2006obstruction}. As noted before, it is not invariant under the addition of a constant to $\theta_X$, but it can be easily modified to be invariant. For example,
\begin{equation*}
F_j(\omega,X):=-i\int_M \Td_j(R+\nabla X)\frac{(\omega+\theta_X)^{n+1-j}}{(n+1-j)!}+\widehat{a_j(\omega_\varphi)}\times i\int_M\frac{(\omega+\theta_X)^{n+1}}{(n
+1)!}
\end{equation*}
is invariant under the addition of a constant to $\theta_X$.
\end{remark}

Now we prove the rest of Theorem \ref{m3}. It generalizes formula \cite[(4.4)]{lu2004k} for all $j\geq 0$. 

\begin{thm}\label{mainthm}
Let $(M,\omega)$ be a polarized K\"ahler manifold. Let $X\in\mathfrak{h}(M)$ and $\theta_X$ be as in (\ref{hamilton}) and suppose it is purely imaginary. Then for all $j\geq 0$, we have the following identity:
\begin{equation}\label{mainformula}
\int_M \theta_X \left( a_j(\omega)-\Delta a_{j-1}(\omega) \right) \frac{\omega^n}{n!}= \frac{1}{(n+1-j)!}\int_M \Td_j(R+\nabla X)(\omega+\theta_X)^{n+1-j}.
\end{equation}
More generally, for non purely imaginary $\theta_X$, we have
\begin{equation}\label{mainformula2}
\int_M i\Im \theta \left( a_j(\omega)-\Delta a_{j-1}(\omega) \right) \frac{\omega^n}{n!}= \frac{1}{(n+1-j)!}\int_M \Td_j(R+\frac12(\nabla X +(\overline{\nabla X})^{\flat,\sharp}))(\omega+i\Im \theta)^{n+1-j}.
\end{equation}
\end{thm}

\begin{remark}
It is clear that the right hand side of (\ref{mainformula}) is $\frac{\widetilde{F_j}(\omega,X)}{(n+1-j)!}$, and the right hand side of (\ref{mainformula2}) is nothing but $\frac{\widetilde{F_j}(\omega,X)}{2(n+1-j)!}+\frac{\widetilde{F_j}(\omega,\overline{X})}{2(n+1-j)!}$; see (\ref{4}). Thus by Theorem \ref{futbar} and Theorem \ref{mainthm}, we obtain holomorphic invariants out of coefficients of Bergman kernel asymptotic expansion.
\end{remark}

\begin{proof}
We start with the purely imaginary $\theta_X$ case. Let $f_t \in Aut(M)$ be the flow of $\re X=(X+\overline{X})/2$. Let $\omega_t:=f_t^*\omega$. Then
\begin{equation}\label{lie}
\dot{\omega_t}|_{t=0}=\mathcal{L}_{\re X}\omega=\frac{1}{2}\left(\p\iota_X \omega +\bp\iota_{\overline{X}}\omega\right)=\frac{1}{2}\left(-\p\bp\theta_X-\bp\p\overline{\theta_X}\right)=-\p\bp\theta_X.
\end{equation}
Since $\omega_t$ has the same K\"ahler class $[\omega]$, we have $\omega_t=\omega+i\p\bp\varphi_t$  with
$\dot{\varphi_t}|_{t=0}=i\theta_X$ upto constant.

Consider $\left.-\frac{d}{dt}\right|_{t=0}\widetilde{S_j}[\varphi_t,0]$. From the first variation formula (\ref{firstvariver1}), we have
\begin{align}\label{integrates}
\left.-\frac{d}{dt}\right|_{t=0}&\widetilde{S_j}[\varphi_t,0]=\int_M \dot{\varphi_t}\left( a_j(\omega_t)-\Delta_t a_{j-1}(\omega_t) \right) \frac{\omega_t^n}{n!}=i\int_M \theta_X\left( a_j(\omega)-\Delta a_{j-1}(\omega) \right) \frac{\omega^n}{n!}. 
\end{align}

Alternatively, from (\ref{2}) we have
\begin{align*}
\left.-\frac{d}{dt}\right|_{t=0}\widetilde{S_j}[\varphi_t,0]&=i\int_M \left.\frac{d}{dt}\right|_{t=0}BC(\Td_j;\omega_t,\omega)\frac{\omega^{n+1-j}}{(n+1-j)!}\nonumber\\
&\qquad+\Td_{j}(R)\left.\frac{d}{dt}\right|_{t=0}\frac{-i}{(n+1-j)!}\sum^{n-j}_{s=0}\varphi_t \omega_{t}^s\wedge \omega^{n-j-s},
\end{align*}
where we used the fact that $\frac{d}{dt}\left(\b_{j-1}(f_t^*\omega)-\b_{j-1}(\omega)\right)=0$ and $\varphi_0=0$. 

Now we compute $\frac{d}{dt}BC(\cdot;\omega_t,\omega)$ using property \textit{(3)} of Bott-Chern forms (Proposition \ref{bc}).
First, by (\ref{lie}) and (\ref{hamilton}), we have
\begin{equation*}
\left.\dot{\omega_t}\right|_{t=0}=-\p\bp\theta_X=\p(\iota_X \omega).
\end{equation*}
This implies, by local computation (we are identifying $\omega$ and the Hermitian metric it defines on $T^{1,0}M$),
\begin{equation*}
\left.\dot{\omega_t}{\omega_t}^{-1}\right|_{t=0}=\frac{\p}{\p z^p} \left( X^s g_{s\bk} \right) g^{q\bk}=\frac{\p X^s}{\p z^p}\delta^q_s+X^s\frac{\p g_{s\bk}}{\p z^p}g^{q\bk}=\frac{\p X^q}{\p z^p}+X^s\Gamma^{q}_{sp}=X^q_p=\nabla X.
\end{equation*}
Thus we have
\begin{equation}\label{Tddiff}
\left.\frac{d}{dt}\right|_{t=0}BC(\Td_j;\omega_t,\omega)=\Td'_j(R;\nabla X).
\end{equation}
For the $\left.\frac{d}{dt}\right|_{t=0}BC(\ch_{n+1-j};\omega_t,\omega)$ part, we can directly differentiate to get
\begin{equation}\label{chdiff}
\left.\frac{d}{dt}\right|_{t=0}\frac{-i}{(n+1-j)!}\sum^{n-j}_{s=0}\varphi_t \omega_{t}^s\wedge \omega^{n-j-s}=\frac{-i}{(n+1-j)!}\sum^{n-j}_{s=0}\dot{\varphi_t}|_{t=0}\omega^{n-j}=\frac{\theta_X \omega^{n-j}}{(n-j)!}.
\end{equation}
Combining (\ref{Tddiff}) and (\ref{chdiff}), we obtain
\begin{equation}\label{3}
\left.-\frac{d}{dt}\right|_{t=0}\widetilde{S_j}[\varphi_t,0]=i\int_M j\Td_j (\nabla X,R,\cdot\cdot\cdot,R)\frac{\omega^{n+1-j}}{(n+1-j)!}+\Td_j(R,\cdot\cdot\cdot,R)\frac{\theta_X \omega^{n-j}}{(n-j)!}.
\end{equation}
On the other hand, we have
\begin{align}\label{4}
\int_M \Td_j(R+\nabla X)(\omega+\theta_X)^{n+1-j}&=\int_M \Td_j(R,\cdot\cdot\cdot,R)(n+1-j)\theta_X \omega^{n-j} \nonumber\\
&\qquad+j\Td_j(\nabla X,R,\cdot\cdot\cdot,R)\omega^{n+1-j}
\end{align}
by dimensional reason. Comparing (\ref{3}) and (\ref{4}), we conclude
\begin{equation*}
\left.-\frac{d}{dt}\right|_{t=0}\widetilde{S_j}[\varphi_t,0]=\frac{i}{(n+1-j)!}\int_M \Td_j(R+\nabla X)(\omega+\theta_X)^{n+1-j},
\end{equation*}
which completes the proof. 

For the general case, note that
\begin{equation*}
\dot{\omega}=\mathcal{L}_{\re X}\omega=-\p\bp\left(\frac{\theta_X-\overline{\theta_X}}{2}\right)=-\p\bp i\Im \theta_X 
\end{equation*}
and
\begin{equation*}
\dot{\omega}{\omega}^{-1}=\frac{\p(\iota_X \omega)+\bp \iota_{\overline{X}} \omega}{2}{\omega}^{-1}=\frac12(\nabla X +(\overline{\nabla X})^{\flat,\sharp}).
\end{equation*}
The rest of the proof goes without changes.
\end{proof}

\begin{remark}
Note that $\theta_X$ is determined only up to an additive constant, but (\ref{mainformula}) behaves correctly under the addition of a constant to $\theta_X$. Using $S_j$ instead of $\widetilde{S_j}$ in the proof, a similar formula can be obtained which does not depend on the normalization of $\theta_X$ for $j>0$ (that is, $F_j(\omega,X)$ instead of $\widetilde{F_j}(\omega,X)$ on the right hand side). 
\end{remark}

We showed that the non-vanishing of $F_j$ obstructs the existence of the critical points of $S_j$. That is, for $X\in\mathfrak{h}(M)$ with purely imaginary holomorphy potential and $f_t \in Aut(M)$ be the flow of $\re X=(X+\overline{X})/2$,
\begin{equation*}
\frac{d}{dt}S_j[f^*_t\omega,\omega]=F_j(\omega,X).
\end{equation*}

When $\mathfrak{h}(M)=0$, the obstruction by $F_j$ becomes trivial. In this regard, assuming that $Aut(M,L)$ is discrete, we can prove Proposition \ref{m4} using Donaldson's result on balanced metrics. Here we call $\omega'$ balanced at level $k$ if the function $\rho_k(\omega')$ is constant.  

\begin{proof}[Proof of Proposition \ref{m4}]
We prove this by induction on $j\geq 1$. Suppose that $\omega_\infty$ is a critical point of $S_j$ for all $1\leq j\leq m$. In particular, $\omega_\infty$ has a constant scalar curvature. By the proof of \cite[Theorem 3]{donaldson2001scalar}, there is a sequence of K\"ahler metrics $\omega_k$ balanced at level $k$ for large enough $k$, such that $||\omega_k-\omega_\infty||_{C^r(M,\omega_\infty)}=O(k^{-q})$ for arbitrary $r,q$. See also \cite[Section 4.3]{donaldson2001scalar}. Choose $r\geq 2m$ and $q\geq m+1$. By the induction hypothesis, we have $a_j(\omega_\infty)-\Delta a_{j-1}(\omega)\equiv\text{constant}=\widehat{a_j}$ for $j\leq m$, which implies $a_j(\omega_\infty)\equiv\widehat{a_j}$ for $j\leq m$. Note that the Bergman kernel asymptotics (\ref{Basymp}) is uniform in the sense that there is a fixed constant $C$ such that
\begin{equation}\label{uniform}
\left\Vert (2\pi)^n\rho_k(\omega_k)-\sum^{m+1}_{j=0} a_j(\omega_k)k^{n-j} \right\Vert_{C^0} \leq Ck^{n-m-2}
\end{equation}
for all $k\gg1$. Since $\omega_k$ are balanced at level $k$, we have
\begin{equation}\label{balanced}
(2\pi)^n\rho_k(\omega_k)\equiv(2\pi)^n \frac{d_k}{V} =  k^n+\widehat{a_1}k^{n-1}+\widehat{a_2}k^{n-2} +\cdots.
\end{equation}
Substituting (\ref{balanced}) in (\ref{uniform}) with the induction hypothesis, we get
\begin{equation*}
\left\Vert \widehat{a_{m+1}}-a_{m+1}(\omega_k) \right\Vert_{C^0} \leq C'k^{-1},
\end{equation*}
for some constant $C'$. Since $\omega_k\rightarrow\omega_\infty$ in $C^\infty$, $a_{m+1}(\omega_\infty)$ is constant and hence $\delta S_{m+1}[\omega_\infty,\omega]=\int_M \delta \varphi_\infty(\widehat{a_{m+1}}+\Delta a_{m}(\omega_\infty)-a_{m+1}(\omega_\infty))\omega_\infty^n/n!=0 $. This completes the induction step.
\end{proof}

Finally, by expression (\ref{mainformula}), we can derive a Bott-type residue formula for the LHS of (\ref{mainformula}) (modified to be invariant under the normalization of $\theta_X$, if necessary). As noted in \cite[Theorem 6.3]{tianresidue}, the proof of that theorem applies directly to our case as well. More precisely, we only need to check that $\bp[ \Td_j(R+\nabla X)(\omega+\theta_X)^{n+1-j}]= -\iota_X [\Td_j(R+\nabla X)(\omega+\theta_X)^{n+1-j}]$, which is immediate from (\ref{lem1}) and (\ref{lem2}). See also \cite[Theorem 5.2.8]{futaki2006kahler}. As the corresponding modification of the statement is routine, we do not include it here.

\section{Non-perturbative approach to the gravitational path integral}\label{sectionpathint}

In \cite{ferrari2012gravitational}, it was shown that the effective action for 2D quantum gravity coupled to nonconformal matter contains $S_1$ and $S_2$, namely, the Mabuchi and Liouville actions. The corresponding string susceptibility was computed at one-loop order in \cite{bilal20142d} by perturbing $S_1$ and $S_2$ around their critical points. For dimension $
n=1$, the critical points of both $S_1$ and $S_2$ correspond to constant curvature metrics, which always exist by the uniformization theorem. However, we have shown that in higher dimensions, there is a nontrivial obstruction to the existence of critical points for each $S_j$. In this section, we briefly review the nonperturbative approach to the gravitational path integral on polarized Kähler manifolds proposed in \cite{ferrari2013random}.

Let $\mathcal{B}_k$ be the set of all Hermitian metrics $H$ on the vector space $H^0(M,L^k)$. There are natural maps between $\mathcal{B}_k$ and $\mathcal{K}_\omega$ :
\begin{equation*}
Hilb_k : \mathcal{K}_\omega \rightarrow \mathcal{B}_k,\quad FS_k : \mathcal{B}_k \rightarrow \mathcal{K}_\omega,
\end{equation*}
defined by
\begin{equation*}
||S||^2_{Hilb_k(\varphi)}:=\frac{d_k}{V}\int_M |S|^2_{h^ke^{-k\varphi}} \frac{\omega_\varphi^n}{n!},\quad FS_k(H):=\frac{1}{k}\log \left(\sum_{i=1}^{d_k}|S^H_i|^2_{h^k} \right)
\end{equation*}
where $(S^H_i)^{d_k}_{i=1}$ is any orthonormal basis of $H^0(M,L^k)$ with respect to $H$. For any $\varphi \in \mathcal{K}_\omega$, let $\varphi_k:=FS_k\circ Hilb_k(\varphi)$. By Tian-Ruan \cite{tian1990set,ruan1996canonical}, $\omega_{\varphi_k}$ converges to $\omega_\varphi$ in $C^\infty$ as $k\rightarrow\infty$. Thus, any K\"ahler metric in $\cc_1(L)$ can be approximated by a metric in the image of $\mathcal{B}_k$ under $FS_k$. In fact, the space $\mathcal{B}_k$ approximates the space $\mathcal{K}_\omega$ in a stronger sense, where the geodesics of $\mathcal{B}_k$ converge to the geodesics of $\mathcal{K}_\omega$ with respect to the natural Riemannian structures. For a more detailed exposition of the subject, see \cite{phong2007lectures,berman2011bergman}. Based on this observation, Ferrari-Klevtsov-Zelditch \cite{ferrari2013random} proposed a formal definition of the path integral on the space of K\"ahler metrics as a limit of finite-dimensional integral over $\mathcal{B}_k/\mathbb{R}$, that is,
\begin{equation*}
\int_{\mathcal{K}_0} \mathcal{O}(\varphi)e^{-S(\varphi)}\mathcal{D}\varphi :=\lim_{k\rightarrow\infty}\int_{\mathcal{B}_k/\mathbb{R}} \mathcal{O}_k(H)e^{-S_k(H)}\mathcal{D}H.
\end{equation*}

See \cite{klevtsov2014stability} for a more detailed analysis of these integrals. Note that for a desired action $S$, one has to find an appropriate sequence of actions $S_k$ on $\mathcal{B}_k$ approximating $S$. Following Donaldson \cite{donaldson2005scalar}, Klevtsov \cite[(7.7)]{klevtsov2014random} defined the functionals $S_{L,k}$ on $\mathcal{B}_k$ that approximate the Liouville action. The following proposition verifies the slight modification of that construction in arbitrary dimension $n$.

\begin{prop}
Choose an orthonormal basis $(\psi_i^k)_{i=1}^{d_k}$ of $H^0(M,L^k)$ with respect to the $L^2$-metric induced from $\omega$. Define the determinant $\det\nolimits_\omega$ on $\mathcal{B}_k$ with respect to $(\psi_i^k)_{i=1}^{d_k}$. Define $S_{L,k}$ on $\mathcal{B}_k$ by
\begin{equation*}
S_{L,k}(H):=\left( (2\pi)^n\log \det\nolimits_{\omega} (H)-(2\pi)^n d_k\log d_k/V - k^nS_1[ \omega_{FS_k(H)},\omega] \right)k^{1-n}.
\end{equation*}
Then $S_{L,k}$ approximates the functional $S_2$ in the following sense.
For any K\"ahler metric $\omega_\varphi \in \mathcal{K}_0$, choose $\varphi$ so that $S_0[\varphi,0]=0$ without loss of generality (in fact, it is customary to identify $\mathcal{K}_0 = \mathcal{K}_\omega/\mathbb{R}$ with $I^{-1}(0)=S_0[\cdot,0]^{-1}(0)$). As $k\rightarrow\infty$, we have
\begin{equation*}
S_{L,k}(Hilb_k(\varphi)) \rightarrow S_2[\omega_\varphi,\omega]
\end{equation*}
uniformly over bounded subsets in $\mathcal{K}_0$.
\end{prop}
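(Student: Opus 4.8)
The plan is to collapse the problem onto the Version II expansion of Theorem~\ref{ver2thm}, after first identifying $\det\nolimits_\omega(Hilb_k(\varphi))$ with the partition-function ratio and then controlling the error coming from the fact that $FS_k\circ Hilb_k$ is not the identity. The starting observation is that, because $(\psi_i^k)$ is chosen orthonormal for the $L^2$-metric of $\omega$, the Gram matrix of this basis in the metric $Hilb_k(\varphi)$ is exactly $\tfrac{d_k}{V}\langle\psi_i^k,\psi_j^k\rangle_{L^2(h^ke^{-k\varphi},\omega_\varphi)}$. Since in this same basis $Z_k[0]=\det(\delta_{ij})=1$ while $Z_k[\varphi]=\det\big(\langle\psi_i^k,\psi_j^k\rangle_{L^2(h^ke^{-k\varphi},\omega_\varphi)}\big)$, pulling out the scalar factor gives
\[
(2\pi)^n\log\det\nolimits_\omega(Hilb_k(\varphi))=(2\pi)^nd_k\log(d_k/V)+(2\pi)^n\log\frac{Z_k[\varphi]}{Z_k[0]}.
\]
Substituting this into the definition of $S_{L,k}$, the $d_k\log(d_k/V)$ terms cancel, and writing $\varphi_k:=FS_k(Hilb_k(\varphi))$ I obtain
\[
S_{L,k}(Hilb_k(\varphi))=k^{1-n}\Big((2\pi)^n\log\tfrac{Z_k[\varphi]}{Z_k[0]}-k^nS_1[\omega_{\varphi_k},\omega]\Big).
\]

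Next I feed in the expansion \eqref{ver2}. The normalization $S_0[\varphi,0]=0$ kills the leading $kd_k(2\pi)^nS_0[\varphi,0]$ term (this is exactly what prevents a divergent $k^2$ contribution), so
\[
S_{L,k}(Hilb_k(\varphi))=k\big(S_1[\omega_\varphi,\omega]-S_1[\omega_{\varphi_k},\omega]\big)+S_2[\omega_\varphi,\omega]+O(k^{-1}),
\]
with the $O(k^{-1})$ remainder uniform on bounded sets by the uniformity of the Bergman asymptotics. By the cocycle identity the bracketed difference equals $S_1[\omega_\varphi,\omega_{\varphi_k}]$, so the entire proposition reduces to the single estimate $k\,S_1[\omega_\varphi,\omega_{\varphi_k}]\to0$.

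This last point is the crux, and it hinges on a quantitative rate for $\varphi_k-\varphi$. Unwinding the definitions, the $Hilb_k(\varphi)$-orthonormal basis is $\sqrt{V/d_k}$ times the $L^2(h^ke^{-k\varphi},\omega_\varphi)$-orthonormal basis, which yields the exact identity
\[
\varphi_k=\varphi+\frac1k\log\frac{V\rho_k(\omega_\varphi)}{d_k}.
\]
Dividing the Bergman expansion \eqref{Basymp}, $(2\pi)^n\rho_k(\omega_\varphi)=\sum_{j\ge0}a_j(\omega_\varphi)k^{n-j}$, by the asymptotic Riemann-Roch-Hirzebruch formula \eqref{aRR2} in the form $(2\pi)^nd_k/V=\sum_{j\ge0}\widehat{a_j}\,k^{n-j}$, and taking the logarithm, the leading terms $a_0=\widehat{a_0}=1$ cancel and I find
\[
\log\frac{V\rho_k(\omega_\varphi)}{d_k}=\frac{a_1(\omega_\varphi)-\widehat{a_1}}{k}+O(k^{-2})
\]
in every $C^r$-norm, hence $\varphi_k-\varphi=O(k^{-2})$ in $C^\infty$, uniformly over bounded subsets of $\mathcal{K}_0$. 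Finally, integrating the first-variation formula \eqref{firstvar} for the Mabuchi functional $S_1$ (where $a_0\equiv1$ makes the $\Delta a_0$ term vanish) along the segment $\varphi+s(\varphi_k-\varphi)$, $s\in[0,1]$, bounds $|S_1[\omega_\varphi,\omega_{\varphi_k}]|\le C\|\varphi_k-\varphi\|_{C^0}=O(k^{-2})$, so that $k\,S_1[\omega_\varphi,\omega_{\varphi_k}]=O(k^{-1})\to0$.

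The main obstacle is precisely this third step. The discrepancy between $FS_k\circ Hilb_k$ and the identity enters the $S_1$-term, which is weighted by $k^n\cdot k^{1-n}=k$; convergence therefore fails for a merely qualitative $\varphi_k\to\varphi$ and requires the sharp rate $O(k^{-2})$, which in turn relies on the first two Bergman coefficients matching the topological averages ($a_0=\widehat{a_0}=1$ and the cancellation governing $a_1-\widehat{a_1}$). The remaining work is bookkeeping: checking that the Bergman expansion, the expansion \eqref{ver2}, and the Mabuchi first-variation estimate are all uniform on bounded subsets of $\mathcal{K}_0$, which is guaranteed by the uniform-on-compacta form of the Bergman kernel asymptotics.
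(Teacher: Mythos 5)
Your argument is correct and follows essentially the same route as the paper's proof: identify $(2\pi)^n\log\det\nolimits_\omega(Hilb_k(\varphi))-(2\pi)^nd_k\log(d_k/V)$ with $(2\pi)^n\log\frac{Z_k[\varphi]}{Z_k[0]}$, apply the Version II expansion with the normalization $S_0[\varphi,0]=0$, derive $\|\varphi_k-\varphi\|_{C^0}=O(k^{-2})$ from the Bergman kernel asymptotics, and control $k\,S_1[\omega_\varphi,\omega_{\varphi_k}]$ via the first-variation formula. Your identification of the crux — that the $S_1$ discrepancy is weighted by $k$ and therefore needs the sharp $O(k^{-2})$ rate — matches the paper exactly.
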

\begin{proof}
Let $\varphi_k:=FS_k\circ Hilb_k (\varphi)$. By definition,
\begin{equation*}
\varphi_k= \varphi + \frac{1}{k}\log \rho_k(\omega_\varphi) -\frac{1}{k}\log \frac{d_k}{V}=\varphi +\frac1k\log \frac{\rho_k(\omega_\varphi)}{d_k/V}.
\end{equation*}
From (\ref{Basymp}) and (\ref{aRR}), one can see that $\left\Vert 1- \frac{\rho_k(\omega_\varphi)}{d_k/V}\right\Vert_{C^0}=O(k^{-1})$. Hence we have
\begin{equation}\label{k2}
|| \varphi_k -\varphi ||_{C^0} \leq Ck^{-2}
\end{equation}
for some constant $C$ uniform over bounded subsets in $\mathcal{K}_0$. 
Let $\varphi_t:=t\varphi_k+(1-t)\varphi$. By (\ref{firstvar}), we have
\begin{align}
\left|S_1[\omega_{\varphi_k},\omega]-S_1[\omega_{\varphi},\omega]\right|&=\left|\int_0^1 dt \int_M(\varphi_k -\varphi)(\widehat{a_j}+\Delta_t a_{j-1}(\omega_t)-a_{j}(\omega_t))\frac{\omega_t^n}{n!}\right| \nonumber\\
&\leq \int_0^1 dt ||\varphi_k -\varphi||_{C^0}\int_M \left|\widehat{a_j}+\Delta_t a_{j-1}(\omega_t)-a_{j}(\omega_t)\right|\frac{\omega_t^n}{n!} \leq Ck^{-2}, \label{k22}
\end{align}
for large enough $k$, where we used (\ref{k2}) and the fact that $\omega_{\varphi_k}\rightarrow \omega_\varphi$ in $C^\infty$ to get constant $C$ uniform over bounded subsets in $\mathcal{K}_0$. Now observe that 
\begin{equation*}
(2\pi)^n\log \det\nolimits_{\omega} (Hilb_k(\varphi))-(2\pi)^n d_k\log d_k/V = (2\pi)^n\log \frac{Z_k[\varphi]}{Z_k[0]}.
\end{equation*}
By asymptotics (\ref{ver2}) and (\ref{k22}), we get
\begin{equation*}
\left| S_{L,k}(Hilb_k(\varphi))- S_2[\omega_\varphi,\omega] \right| \leq  k\left| S_1[\omega_\varphi,\omega] - S_1[\omega_{\varphi_k},\omega]\right| +C'k^{-1}\leq C k^{-1},
\end{equation*}
using the assumption $S_0[\varphi,0]=0$.
\end{proof}

\section{Derivation of the (2n+1)D Chern-Simons action}\label{sectioncs}

Recall that, in physics literature, $\Psi^k$ from the definition of a determinantal point process corresponds to the integer quantum Hall wave function; see \cite{tong2016lectures,klevtsov2016geometry}. Denote by $A_\mu$ the connection ($U(1)$-gauge field) on $L$. An effective action $S_{eff}[A_\mu]$ for the integer QHE is defined by
\begin{equation*}
Z[A_\mu]=e^{iS_{eff}[A_\mu]}.
\end{equation*}
The functional derivative of the effective action with respect to the time component of $A_\mu$ is given by
\begin{equation}\label{karabali}
\frac{\delta S_{eff}}{\delta A_0} = J_0,
\end{equation}
where $J_0$ is the charge density. In \cite[(26), (41)]{karabali2016geometry}, Karabali-Nair used (\ref{karabali}) to derive an effective action for the higher-dimensional QHE in terms of the Chern-Simons forms integrated over a $2n+1$ dimensional manifold. In this section, we present an alternative way of deriving their formula and show that as $k\rightarrow\infty$, the leading-order term is the $2n+1$ dimensional Chern-Simons action. The following construction is motivated by \cite[Proposition 1.4]{tian1999bott}.

Let $\varphi_1 \in \mathcal{K}_\omega$ and choose a smooth path $\varphi_t$ in $\mathcal{K}_\omega$ joining $0$ and $\varphi_1$. Let $\varphi : \mathbb{C} \rightarrow \mathcal{K}_\omega$ be a smooth map defined by $\varphi(z)=\varphi_t$ where $z=1-t+is\in\mathbb{C}$ and trivially extended over $t\notin [0,1]$. Define Hermitian metrics $\mathbf{h}$ and $\boldsymbol{\omega}$ on the pull-back bundles $\mathbf{L}:=\pi_1^*L$ and $\mathbf{T'M}:=\pi_1^*T^{1,0}M$, where $\pi_1 : M\times\mathbb{C}\rightarrow M$ is the projection, by
\begin{equation*}
\left.\mathbf{h}\right|_{M\times\{z\}}=\left.h_{\varphi(z)}\right|_M=\left.he^{-\varphi(z)}\right|_M,
\end{equation*}
and
\begin{equation*}
\left.\boldsymbol{\omega}\right|_{M\times\{z\}}=\left.\omega_{\varphi(z)}\right|_M.
\end{equation*}
Also, define Hermitian metrics $\mathbf{h_0}$ and $\boldsymbol{\omega_0}$ on $\mathbf{L}$ and $\mathbf{T'M}$ by
\begin{equation*}
\left.\mathbf{h_0}\right|_{M\times\{z\}}=\left.h\right|_M,
\end{equation*}
and
\begin{equation*}
\left.\boldsymbol{\omega_0}\right|_{M\times\{z\}}=\left.\omega\right|_M.
\end{equation*}
That is, $\mathbf{h_0}=\pi_1^*h$ and $\boldsymbol{\omega_0}=\pi_1^*\omega$.

\begin{lem}\label{tianlem}
On $\mathbb{C}$, we have
\begin{align*}
&\p_z \int_M BC(\Td;\omega_{\varphi(z)},\omega)\ch(R_L(h))+\Td(R_{T^{1,0}M}(\omega_{\varphi(z)}))BC(\ch;h_{\varphi(z)},h) \\
&=\int_M \left[ CS(\Td;\boldsymbol{\nabla}_\mathbf{T'M},\boldsymbol{\nabla^0}_\mathbf{T'M})\ch(R_\mathbf{L}(\mathbf{h_0}))+\Td(R_{\mathbf{T'M}}(\boldsymbol{\omega}))CS(\ch;\boldsymbol{\nabla}_\mathbf{L},\boldsymbol{\nabla^0}_\mathbf{L})\right]_{2n+1},
\end{align*}
where $\boldsymbol{\nabla}_\mathbf{L}$, $\boldsymbol{\nabla}_\mathbf{T'M}$ are Chern connections on $\mathbf{L}$, $\mathbf{T'M}$ associated with $\mathbf{h}$, $\boldsymbol{\omega}$, respectively, and $\boldsymbol{\nabla^0}_\mathbf{L}$, $\boldsymbol{\nabla^0}_\mathbf{T'M}$ are Chern connections associated with $\mathbf{h_0}$, $\boldsymbol{\omega_0}$, respectively.
\end{lem}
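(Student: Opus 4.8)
The plan is to lift the identity to the total space $M\times\mathbb{C}$ and reduce it to the transgression relation $\p BC=CS$ of (\ref{bccs}), in the spirit of \cite[Proposition 1.4]{tian1999bott}. The key observation is that the left-hand integrand is the fiberwise restriction of a single form on $M\times\mathbb{C}$ whose $\p$-derivative is exactly the Chern-Simons integrand on the right. Accordingly, I would introduce on $M\times\mathbb{C}$ the form
\[
\mathcal{B}:=BC(\Td;\boldsymbol{\omega},\boldsymbol{\omega_0})\,\ch(R_\mathbf{L}(\mathbf{h_0}))+\Td(R_{\mathbf{T'M}}(\boldsymbol{\omega}))\,BC(\ch;\mathbf{h},\mathbf{h_0}),
\]
built from the four metrics by the Bott-Chern construction of Proposition \ref{bc}. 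Pulling back along the holomorphic inclusion $\iota_z:M\hookrightarrow M\times\{z\}$ kills $dz,d\bar z$ and commutes with the Chern curvature, so by naturality of Bott-Chern forms $\iota_z^*\mathcal{B}$ equals the left-hand integrand (with $\boldsymbol{\omega},\mathbf{h}$ replaced by $\omega_{\varphi(z)},h_{\varphi(z)}$ and $\boldsymbol{\omega_0},\mathbf{h_0}$ by $\omega,h$); hence $\int_M\iota_z^*\mathcal{B}$ is precisely the function being differentiated on the left.

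Next I would rewrite the right-hand integrand as $\p\mathcal{B}$. Using (\ref{bccs}) to replace $CS(\Td;\boldsymbol{\nab}_\mathbf{T'M},\boldsymbol{\nab^0}_\mathbf{T'M})=\p BC(\Td;\boldsymbol{\omega},\boldsymbol{\omega_0})$ and $CS(\ch;\boldsymbol{\nab}_\mathbf{L},\boldsymbol{\nab^0}_\mathbf{L})=\p BC(\ch;\mathbf{h},\mathbf{h_0})$, and observing that the companion factors $\ch(R_\mathbf{L}(\mathbf{h_0}))$ and $\Td(R_{\mathbf{T'M}}(\boldsymbol{\omega}))$ are $d$-closed and of even degree, the Leibniz rule for $\p$ collapses the two terms into
\[
CS(\Td;\boldsymbol{\nab}_\mathbf{T'M},\boldsymbol{\nab^0}_\mathbf{T'M})\ch(R_\mathbf{L}(\mathbf{h_0}))+\Td(R_{\mathbf{T'M}}(\boldsymbol{\omega}))CS(\ch;\boldsymbol{\nab}_\mathbf{L},\boldsymbol{\nab^0}_\mathbf{L})=\p\mathcal{B}.
\]
This is exactly where the lemma's choice of metrics is used: placing $\mathbf{h_0}$ in the Chern-character factor and $\boldsymbol{\omega}$ in the Todd factor is what makes both companion factors closed, so that the unwanted Leibniz terms $BC(\Td)\,\p\ch(R_\mathbf{L}(\mathbf{h_0}))$ and $\p\Td(R_{\mathbf{T'M}}(\boldsymbol{\omega}))\,BC(\ch)$ vanish.

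Finally I would perform the fiber integration. Decomposing $\mathcal{B}=\mathcal{B}_0+\mathcal{B}_z\,dz+\mathcal{B}_{\bar z}\,d\bar z+\mathcal{B}_{z\bar z}\,dz\wedge d\bar z$ according to the number of base one-forms and writing $\p=\p_M+dz\,\p_z$, the only contribution to $\int_M[\p\mathcal{B}]_{2n+1}$ that survives is $dz\wedge\p_z[\mathcal{B}_0]_{2n}$: the pieces $\p_M(\mathcal{B}_z\,dz)$ and $\p_M(\mathcal{B}_{\bar z}\,d\bar z)$ integrate to zero over the closed manifold $M$ by Stokes for $\p_M$, while every term carrying $dz\wedge d\bar z$ has fiber degree below $2n$. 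Since $[\mathcal{B}_0]_{2n}=\iota_z^*\mathcal{B}$ integrates to the left-hand function, this gives $\int_M[\p\mathcal{B}]_{2n+1}=\p_z\big(\int_M\iota_z^*\mathcal{B}\big)\,dz$, which is the asserted identity (both sides understood as the $dz$-component of the resulting one-form on $\mathbb{C}$).

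The main obstacle is purely a matter of bookkeeping with representatives. Since $BC$ and $CS$ are a priori defined only modulo $\Im\p+\Im\bar\p$ and $\Im d$, I must fix representatives for which (\ref{bccs}) holds as an honest equality of forms and then check that the residual exact-form ambiguities are annihilated by fiber integration over the closed $M$ — which is precisely the content of the Stokes vanishing used above. Alongside this, one must verify the naturality statement $\iota_z^*BC(\phi;\cdot,\cdot)=BC(\phi;\iota_z^*\cdot,\iota_z^*\cdot)$ (immediate from functoriality of the Chern connection under the holomorphic inclusion) and keep track of the degrees and signs in the last step. None of these points is conceptually difficult, but they must be handled consistently for the identity to hold on the nose rather than merely up to exact terms.
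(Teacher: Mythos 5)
Your argument is correct and is essentially the paper's own proof: both lift the integrand to a single Bott--Chern form on $M\times\mathbb{C}$, convert $\p BC$ into $CS$ via (\ref{bccs}), and kill the fiberwise $\p_M$-exact terms by Stokes on the closed manifold $M$. The only cosmetic difference is that the paper runs the Stokes step by pairing against a compactly supported $(0,1)$-form on $\mathbb{C}$ and integrating by parts, whereas you carry out the base/fiber degree decomposition and the fiber integration of $\p\mathcal{B}$ directly.
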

\begin{proof}
Let $f$ be an arbitrary smooth (0,1)-form with compact support in $\mathbb{C}$. Then we have
\begin{align*}
&\int_{\mathbb{C}} \p_z f \int_M BC(\Td;\omega_{\varphi(z)},\omega)\ch(R_L(h))+\Td(R_{T^{1,0}M}(\omega_{\varphi(z)}))BC(\ch;h_{\varphi(z)},h) \\
&=\int_{M\times\mathbb{C}} \left(\p f\right) \left[ BC(\Td;\boldsymbol{\omega},\boldsymbol{\omega_0})\ch(R_\mathbf{L}(\mathbf{h_0}))+\Td(R_{\mathbf{T'M}}(\boldsymbol{\omega}))BC(\ch;\mathbf{h},\mathbf{h_0}) \right]_{2n} \\
&=\int_{M\times\mathbb{C}} f\left[ \p BC(\Td;\boldsymbol{\omega},\boldsymbol{\omega_0})\ch(R_\mathbf{L}(\mathbf{h_0})) + \Td(R_{\mathbf{T'M}}(\boldsymbol{\omega}))\p BC(\ch;\mathbf{h},\mathbf{h_0}) \right]_{2n+1} \\
&= \int_{\mathbb{C}} f \int_M \left[ CS(\Td;\boldsymbol{\nabla}_\mathbf{T'M},\boldsymbol{\nabla^0}_\mathbf{T'M})\ch(R_\mathbf{L}(\mathbf{h_0}))+\Td(R_{\mathbf{T'M}}(\boldsymbol{\omega}))CS(\ch;\boldsymbol{\nabla}_\mathbf{L},\boldsymbol{\nabla^0}_\mathbf{L})\right]_{2n+1},
\end{align*}
where the first identity is obtained by the fact that $\p f$ is of the form $\tilde{f}(z)dz\wedge d\bz$, the second identity is obtained by integration by parts, and the last identity is obtained by property (\ref{bccs}).
\end{proof}

From (\ref{Qu}) and (\ref{anomaly}), we have an expression of the effective action in terms of the Bott-Chern forms (ignoring $2\pi$ factors, assuming $\log Z_1[0]=0$ and higher cohomology of $L$ vanishes):
\begin{align}\label{effbc}
&S_{eff}[A_\mu]= -i\log Z_1[\varphi_1]\nonumber \\
&= -\int_M BC(\Td;\omega_{\varphi_1},\omega)\ch(R_{L}(h))+\Td(R_{T^{1,0}M}(\omega_{\varphi_1}))BC(\ch;h_{\varphi_1},h) - 2i\log \frac{T(\omega_{\varphi_1},h_{\varphi_1})}{T(\omega,h)}.
\end{align}
Denote $\widetilde{S}:=-2i\log \frac{T(\omega_{\varphi_1},h_{\varphi_1})}{T(\omega,h)}$. Now we prove Proposition \ref{m5}.

\begin{proof}[Proof of Proposition \ref{m5}]
By Lemma \ref{tianlem}, we have
\begin{align*}
&-\int_M BC(\Td;\omega_{\varphi_1},\omega)\ch(R_{L}(h))+\Td(R_{T^{1,0}M}(\omega_{\varphi_1}))BC(\ch;h_{\varphi_1},h)\\
&=-\int_0^1 \frac{\p}{\p t}\left( \int_M BC(\Td;\omega_{\varphi_t},\omega)\ch(R_{L}(h))+\Td(R_{T^{1,0}M}(\omega_{\varphi_t}))BC(\ch;h_{\varphi_t},h)\right)\wedge dt\\
&=\int_0^1 2 \p_z \int_M BC(\Td;\omega_{\varphi(z)},\omega)\ch(R_{L}(h))+\Td(R_{T^{1,0}M}(\omega_{\varphi(z)}))BC(\ch;h_{\varphi(z)},h)\\
&=\int_0^1 2\int_M \left[ CS(\Td;\boldsymbol{\nabla}_\mathbf{T'M},\boldsymbol{\nabla^0}_\mathbf{T'M})\ch(R_\mathbf{L}(\mathbf{h_0}))+\Td(R_{\mathbf{T'M}}(\boldsymbol{\omega}))CS(\ch;\boldsymbol{\nabla}_\mathbf{L},\boldsymbol{\nabla^0}_\mathbf{L})\right]_{2n+1}.
\end{align*}
Substituting it into (\ref{effbc}), we get the formula (\ref{eff}). The last claim follows immediately.
\end{proof}

It would be interesting to understand the relation between (\ref{eff}) and \cite[Theorem 3]{klevtsov2017quantum} in dimension $n=1$.

\appendix \section{Explicit computations for the third coefficient}\label{appendix}

Recall the $1$-forms $\gamma^{(j)}$ defined on $\mathcal{K}_\omega$ by (\ref{1forms}). In this appendix, we show that $\gamma^{(2)}$ is closed and obtain the second variation formula for $S_2$. Note that here we do not assume the polarization of $[\omega]$. We start with some standard identities in K\"ahler geometry. A good reference for K\"ahler geometry is \cite{szekelyhidi2014introduction}. Let $\a, \b$ be $(1,1)$-forms given by $\a=i\a_{j\bk}dz^j\wedge d\bz^k, \b=i\b_{j\bk}dz^j\wedge d\bz^k$ such that $\a_{j\bk},\b_{j\bk}$ are Hermitian matrices. Then we have
\begin{align}
n\a\wedge\omega^{n-1}&=\left(\tr_{\omega}\a\right)\omega^n ;\label{computation1}\\
n(n-1)\a\wedge\b\wedge\omega^{n-2}&=\left[ \left(\tr_{\omega}\a\right) \left(\tr_{\omega}\b\right) -\langle\a,\b\rangle_{\omega} \right]\omega^n,\label{computation2}
\end{align}
where $\tr_{\omega}\a:=g^{j\bk}\a_{j\bk}$ and $\langle\a,\b\rangle_{\omega}:=g^{j\bk}g^{r\bl}\a_{j\bl}\b_{r\bk}$. Let $\omega_t:=\omega+it\p\bp\varphi\in\mathcal{K}_0$ for $t$ near $0$. We collect some variation formulas for the associated geometric quantities in the following. We denote by $\ric$ the Ricci form defined by $\ric:=i\ric_{j\bk}dz^j\wedge d\bz^k$.
\begin{align*}
\frac{d}{dt}\ric&=-i\p\bp\Delta\varphi ; \quad \frac{d}{dt} S=-\Delta^2 \varphi - \langle i\p\bp\varphi,\ric \rangle_{\omega};\\
\frac{d}{dt}\Delta S &= -\Delta^3\varphi -\Delta\langle i\p\bp\varphi,\ric \rangle_{\omega}-\langle i\p\bp\varphi,i\p\bp S\rangle_{\omega}.
\end{align*}
Finally, note that for functions $f, g$ and closed $(n-1,n-1)$-form $T$, the expression $\int_M f i\p\bp g\wedge T$ is symmetric in $f$ and $g$, by integration by parts.

\begin{thm}
Define $1$-form $\gamma^{(2)}$ on $\mathcal{K}_\omega$ by
\begin{equation*}
\gamma^{(2)}_{\varphi}(\psi):=\int_M \psi \left( \frac{1}{6}\Delta_\varphi S_\varphi-\frac{1}{24}\left(|R_\varphi|^2-4|\ric_\varphi|^2+3S_\varphi^2\right) \right) \frac{\omega_\varphi^n}{n!}
\end{equation*}
for $\psi\in T_{\varphi}\mathcal{K}_\omega=C^{\infty}(M,\mathbb{R})$. Then $\gamma^{(2)}$ is closed.
\end{thm}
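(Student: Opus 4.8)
The plan is to exploit that $\mathcal{K}_\omega$ is an open convex subset of the Fréchet space $C^\infty(M,\mathbb{R})$, so that each tangent space is canonically $C^\infty(M,\mathbb{R})$ and one may work with \emph{constant} vector fields $u,v\in C^\infty(M,\mathbb{R})$. For such fields $[u,v]=0$, and the intrinsic formula for the exterior derivative of a $1$-form collapses to
\[
d\gamma^{(2)}(u,v)=D_u\big(\gamma^{(2)}(v)\big)-D_v\big(\gamma^{(2)}(u)\big),
\]
where $D_u$ denotes the derivative along the affine segment $t\mapsto\varphi+tu$. Hence $\gamma^{(2)}$ is closed precisely when the bilinear pairing $\mathcal{D}(u,v):=D_u(\gamma^{(2)}(v))$ is symmetric in $u$ and $v$. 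Since $\mathcal{D}(u,v)$ is computed at a single point $\varphi$, I would replace $\omega$ by $\omega_\varphi$ and assume $\varphi=0$, so that everything is the derivative at $t=0$ along $\omega_t=\omega+it\p\bp u$.

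Writing $F:=\tfrac16\Delta S-\tfrac1{24}(|R|^2-4|\ric|^2+3S^2)$ and using $\tfrac{d}{dt}\tfrac{\omega_t^n}{n!}=\Delta u\,\tfrac{\omega^n}{n!}$, the Leibniz rule gives
\[
\mathcal{D}(u,v)=\int_M v\,\dot F\,\frac{\omega^n}{n!}+\int_M vF\,\Delta u\,\frac{\omega^n}{n!},
\]
where $\dot{(\ )}$ is $\tfrac{d}{dt}\big|_{t=0}$. The variations $\dot\ric=-i\p\bp\Delta u$, $\dot S=-\Delta^2u-\langle i\p\bp u,\ric\rangle$ and $\dot{\Delta S}$ are recorded just before the statement; the only further inputs are $\dot{|\ric|^2}$ and $\dot{|R|^2}$, which I would assemble from $\dot\ric$, the standard Kähler variation of the full curvature tensor $R_{p\bar qr\bar s}$, and the variation $\dot{g^{j\bar k}}=-g^{j\bar l}g^{p\bar k}\p_p\p_{\bar l}u$ of the inverse metric through which the pointwise norms are formed. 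Substituting these expands $\mathcal{D}(u,v)$ into a finite sum of integrals whose integrands are products of derivatives of $u$, derivatives of $v$, and the curvature of $\omega$.

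I would then integrate by parts to test the symmetry, using repeatedly that $\Delta$ is self-adjoint for $\omega^n/n!$, that $\int_M f\,i\p\bp g\wedge T$ is symmetric in $f,g$ for any closed $(n-1,n-1)$-form $T$ (the identity noted before the theorem), and the pointwise identities (\ref{computation1})--(\ref{computation2}) to trade wedge products for metric contractions. The terms that are manifestly symmetric cause no trouble: the top-order piece gives $\int_M v\,\Delta^3u=\int_M u\,\Delta^3v$, and terms of the shape $\int_M\langle i\p\bp u,i\p\bp v\rangle(\cdots)$ or $\int_M u\,i\p\bp v\wedge i\p\bp S\wedge\omega^{n-2}$ are symmetric by the same lemma.

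The genuine obstacle is the collection of antisymmetric-looking terms produced by $\dot{|R|^2}$ and $\dot{|\ric|^2}$: after integration by parts these contain third covariant derivatives of $u$ contracted against first derivatives of $v$ through the curvature tensor, and to match such a term with its $u\leftrightarrow v$ transpose one must commute covariant derivatives past the curvature, generating correction terms controlled by the Kähler commutation relations and the second Bianchi identity. The content of the theorem is precisely that these corrections cancel --- which is the differential-geometric shadow of the fact that the integrand $\Delta a_1-a_2$ descends from the closed characteristic form $\Td_2(T^{1,0}M)\ch_{n-2}(L)$ --- but since the point here is a polarization-free verification, I would carry out the cancellation by hand. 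Establishing $\mathcal{D}(u,v)=\mathcal{D}(v,u)$ proves closedness; the same bookkeeping, specialized to $u=v=\dot\varphi$ and corrected by the $\ddot\varphi$ and average-$\widehat{a_2}$ contributions on $\mathcal{K}_0$, then yields the second variation formula of Proposition \ref{secondvar}.
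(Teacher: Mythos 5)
Your overall strategy coincides with the paper's second proof: reduce closedness to the symmetry of the bilinear form $\mathcal{D}(u,v)=D_u(\gamma^{(2)}(v))$ at a point, normalize $\varphi=0$, expand by the Leibniz rule using the recorded variations of $\ric$, $S$, $\Delta S$ and of the volume form, and check term-by-term symmetry under integration by parts. The reduction, the treatment of the top-order term $\int_M v\,\Delta^3u$, and the use of the symmetry of $\int_M f\,i\p\bp g\wedge T$ are all exactly as in the paper.

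However, there is a genuine gap: the decisive step --- symmetry of the contributions coming from $|R|^2$ and $|\ric|^2$ --- is asserted rather than carried out, and the route you sketch for it (vary the full curvature tensor and the inverse metrics entering the pointwise norms, then cancel the asymmetric remainders by commuting covariant derivatives and invoking the second Bianchi identity) is precisely the computation you never perform. The paper avoids this entirely by a different device: using (\ref{computation2}) it rewrites $(S^2-|\ric|^2)\,\omega^n$ as $n(n-1)\,\ric\wedge\ric\wedge\omega^{n-2}$ and $(|\ric|^2-|R|^2)\,\omega^n$ as $-n(n-1)\Tr_2(R)\wedge\omega^{n-2}$, so that the only variations needed are $\dot\ric=-i\p\bp\Delta u$ and the transgression formula $\frac{d}{dt}\Tr_2(R_t)=\bp\p\,2\Tr[R\dot\omega\omega^{-1}]$ from Proposition \ref{bc}. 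After integrating the resulting $\bp\p$ by parts, every term is either manifestly symmetric or of the form $\int_M\Delta\psi_a\langle i\p\bp\psi_b,\ric\rangle\,\omega^n$, and these combine with coefficients $-4,-6,-2$ into $-6\big(\Delta\psi_1\langle i\p\bp\psi_2,\ric\rangle+\Delta\psi_2\langle i\p\bp\psi_1,\ric\rangle\big)$, which is symmetric; no Bianchi identity or commutation of covariant derivatives ever enters. You should either adopt this wedge-product reformulation or actually exhibit the cancellation you promise. Note also that the paper gives a second, shorter argument (its Proof 1) establishing the stronger statement that $\gamma^{(2)}$ is exact, by writing down an explicit primitive built from $BC(\Td_2;\omega_\varphi,\omega)$ and differentiating it; your closing remark that the integrand ``descends from the closed characteristic form'' is the germ of that argument, but you do not develop it.
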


\begin{proof}[Proof 1]
In this first proof, we prove $\gamma^{(2)}$ is exact. That is, let $\widetilde{S} : \mathcal{K}_\omega \rightarrow \mathbb{R}$ by 
\begin{equation}\label{exact}
\widetilde{S}(\varphi)=\int_M -i BC(\Td_2;\omega_\varphi,\omega)\frac{\omega^{n-1}}{(n-1)!}+\Td_{2}(R_\varphi)\frac{-1}{(n-1)!}\sum^{n-2}_{s=0}\varphi \omega_{\varphi}^s\wedge \omega^{n-2-s}.
\end{equation}
Let $\psi\in T_{\varphi}\mathcal{K}_\omega$. We will show that $d\widetilde{S}_\varphi(\psi)=\left.\frac{d}{dt} \right|_{t=0}\widetilde{S}(\varphi+t\psi)=\gamma^{(2)}_{\varphi}(\psi)$. It is clear that by virtue of cocyle identity (see Remark \ref{remarkcocycle}), we can assume $\varphi=0$ without loss of generality. First, we have 
\begin{equation*}
\Td_2 = \frac{1}{12}(\cc_1^2+\cc_2)=\frac{1}{24}(-\cc_1^2+2\cc_2+3\cc_1^2)=\frac{1}{24}(\Tr_2-3\Tr_1^2)
\end{equation*}
where we denote by $\Tr_j$ the $j^{th}$ trace polynomial $\Tr_j(A):=\Tr(A^j)$. We use property \textit{(3)} of Proposition \ref{bc} to compute :
\begin{align*}
\left.\frac{d}{dt} \right|_{t=0} BC(\Tr_2;\omega_{t\psi},\omega)\frac{\omega^{n-1}}{(n-1)!}&=2\Tr[R\dot{\omega}\omega^{-1}]\frac{\omega^{n-1}}{(n-1)!}\\
=2R_{p}{}^{q}{}_{j\bk}\p_{q}\p_{\bl}\psi g^{p\bl} dz^j\wedge d\bz^k\frac{\omega^{n-1}}{(n-1)!} &= \frac{2}{i}\ric_p^q \p_{q}\p_{\bl}\psi g^{p\bl} \frac{\omega^n}{n!} = \frac{2}{i} \langle \ric,i\p\bp\psi \rangle_\omega \frac{\omega^n}{n!},
\end{align*}
and
\begin{align*}
\left.\frac{d}{dt} \right|_{t=0} BC(\Tr_1^2;\omega_{t\psi},\omega)\frac{\omega^{n-1}}{(n-1)!}&=2\Tr[R]\Tr[\dot{\omega}\omega^{-1}]\frac{\omega^{n-1}}{(n-1)!}\\
=\frac{2}{i}\Delta\psi\ric\wedge \frac{\omega^{n-1}}{(n-1)!}&=\frac{2}{i}\Delta\psi S \frac{\omega^{n}}{n!},
\end{align*}
where we used (\ref{computation1}) and (\ref{computation2}). Since (\ref{computation1}) and (\ref{computation2}) will be used frequently from now on, we will not mention them each time they are used. In summary, we get
\begin{align}\label{ex1}
\left.\frac{d}{dt} \right|_{t=0} \int_M -iBC(\Td_2;\omega_{t\psi},\omega)\frac{\omega^{n-1}}{(n-1)!}=\int_M -\frac{1}{12}\langle \ric,i\p\bp\psi \rangle_\omega \frac{\omega^{n}}{n!} &+\frac{1}{4}\Delta\psi S \frac{\omega^{n}}{n!}\nonumber\\
=\int_M \frac{1}{12} \ric\wedge i\p\bp\psi\wedge\frac{\omega^{n-2}}{(n-2)!}-\frac{1}{12}S\Delta\psi\frac{\omega^{n}}{n!}+\frac{1}{4}\Delta\psi S \frac{\omega^{n}}{n!} &=\int_M \frac{1}{6}\psi \Delta S\frac{\omega^{n}}{n!}.
\end{align}
For the second term in (\ref{exact}), we have
\begin{align}\label{ex2}
\left.\frac{d}{dt} \right|_{t=0}& \int_M \Td_2(R)\frac{-1}{(n-1)!}\sum^{n-2}_{s=0}t\psi \omega_{t\psi}^s\wedge \omega^{n-2-s} = \int_M -\Td_2(R)\psi\frac{\omega^{n-2}}{(n-2)!} \nonumber \\
&= \int_M \frac{1}{24}\left( -\Tr_2(R)+3\Tr_1^2(R) \right)\psi \frac{\omega^{n-2}}{(n-2)!}\nonumber \\
&= \int_M \frac{1}{24}\psi\left(|\ric|^2-|R|^2\right)\frac{\omega^{n-2}}{(n-2)!}-\frac{3}{24}\psi\ric\wedge\ric\wedge\frac{\omega^{n-2}}{(n-2)!}\nonumber\\
&=\int_M -\frac{1}{24}\psi\left(|R|^2-4|\ric|^2+3S^2\right)\frac{\omega^{n-2}}{(n-2)!}.
\end{align}
Combining (\ref{ex1}) and (\ref{ex2}), we prove the claim.
\end{proof}

\begin{proof}[Proof 2]
In this second proof, we directly prove $\gamma^{(2)}$ is closed in the spirit of \cite{mabuchi1986k}. Let $\psi_1, \psi_2 \in T_{\varphi}\mathcal{K}_\omega=C^{\infty}(M,\mathbb{R})$. We will show that $d\gamma^{(2)}_\varphi(\psi_1,\psi_2)=0$. Note that since
\begin{equation*}
d\gamma^{(2)}_\varphi(\psi_1,\psi_2)=\psi_1\cdot\gamma^{(2)}(\psi_2)-\psi_2\cdot\gamma^{(2)}(\psi_1),
\end{equation*}
we only need to show $\psi_1\cdot\gamma^{(2)}(\psi_2)=\left.\frac{d}{dt}\right|_{t=0}\gamma^{(2)}_{\varphi+t\psi_1}(\psi_2)$ is symmetric in $\psi_1$ and $\psi_2$. Assume $\varphi=0$ without loss of generality. First we divide $\gamma^{(2)}$ into three parts :
\begin{align*}
24n!\gamma^{(2)}_0(\psi) &= \int_M 4\psi\Delta S \omega^n + \int_M 3\psi\left(|\ric|^2-S^2\right)\omega^n +\int_M \psi \left(|\ric|^2-|R|^2\right)\omega^n\\
&=:I_0(\psi)+II_0(\psi)+III_0(\psi).
\end{align*}

\textit{i) $\psi_1\cdot I(\psi_2)$}
\begin{align}\label{final1}
&\left.\frac{d}{dt}\right|_{t=0}I_{t\psi_1}(\psi_2)= 4\int_M \psi_2 \left.\frac{d}{dt}\right|_{t=0}\Delta_{t\psi_1} S_{\psi_1} \omega_{t\psi_1}^n  \nonumber\\
&\;= 4\int_M \psi_2 \left( -\Delta^3\psi_1 -\Delta\langle i\p\bp\psi_1,\ric \rangle-\langle i\p\bp\psi_1,i\p\bp S\rangle + \Delta S \Delta \psi_1\right)\omega^n \nonumber\\
&\;=4\int_M -\psi_2\Delta^3\psi_1 \omega^n -\Delta\psi_2\langle i\p\bp\psi_1,\ric \rangle\omega^n + n(n-1)\psi_2 i\p\bp\psi_1\wedge i\p\bp S\wedge\omega^{n-2}.
\end{align}
Note that $\int_M -\psi_2\Delta^3\psi_1 \omega^n$ and $\int_M \psi_2 i\p\bp\psi_1\wedge i\p\bp S\wedge\omega^{n-2}$ are symmetric in $\psi_1$ and $\psi_2$. 

\vspace{5mm}
\textit{ii) $\psi_1\cdot II(\psi_2)$}
\begin{align}\label{final2}
&\left.\frac{d}{dt}\right|_{t=0}II_{t\psi_1}(\psi_2)= -3\left.\frac{d}{dt}\right|_{t=0}\int_M \psi_2 \left(S_{t\psi_1}^2-|\ric_{t\psi_1}|^2\right)\omega_{t\psi_1}^n \nonumber\\
&\;= -3\left.\frac{d}{dt}\right|_{t=0}\int_M n(n-1)\psi_2\ric_{r\psi_1}\wedge\ric_{r\psi_1}\wedge\omega_{t\psi_1}^{n-2} \nonumber\\
&\;=-3n(n-1)\int_M -2\psi_2i\p\bp\Delta\psi_1\wedge\ric\wedge\omega^{n-2}+(n-2)\psi_2\ric^2\wedge i\p\bp\psi_1\wedge\omega^{n-3} \nonumber\\
&\;=6n(n-1)\int_M \Delta\psi_1 i\p\bp\psi_2\wedge\ric\wedge\omega^{n-2}-3n(n-1)(n-2)\int_M \psi_2i\p\bp\psi_1\wedge\ric^2\wedge\omega^{n-3} \nonumber\\
&\;= 6 \int_M \Delta\psi_1 \Delta\psi_2 S \omega^n -6\int_M\Delta\psi_1\langle i\p\bp\psi_2,\ric \rangle\omega^n \nonumber\\
&\qquad\qquad\qquad\qquad\qquad\qquad\qquad\qquad-3n(n-1)(n-2)\int_M \psi_2i\p\bp\psi_1\wedge\ric^2\wedge\omega^{n-3}.
\end{align}
Note that $\int_M \Delta\psi_1 \Delta\psi_2 S \omega^n$ and $\int_M \psi_2i\p\bp\psi_1\wedge\ric^2\wedge\omega^{n-3}$ are symmetric in $\psi_1$ and $\psi_2$. 

\vspace{5mm}
\textit{iii) $\psi_1\cdot III(\psi_2)$}

\begin{align}\label{tr2cal}
&\left.\frac{d}{dt}\right|_{t=0}III_{t\psi_1}(\psi_2)= \left.\frac{d}{dt}\right|_{t=0}\int_M \psi_2\left(|\ric_{t\psi_1}|^2-|R_{t\psi_1}|^2\right)\omega_{t\psi_1}^n \nonumber\\
&\;=\left.\frac{d}{dt}\right|_{t=0} -n(n-1)\int_M \psi_2\Tr_2(R_{t\psi_1})\wedge\omega_{t\psi_1}^{n-2} \nonumber\\
&\;= -n(n-1)\int_M \psi_2 \left( \left.\frac{d}{dt}\right|_{t=0} \Tr_2(R_{t\psi_1}) \right) \wedge\omega^{n-2} \nonumber\\
&\qquad\qquad\qquad\qquad\qquad\qquad\qquad-n(n-1)(n-2)\int_M \psi_2 i\p\bp\psi_1\wedge\Tr(R^2)\wedge\omega^{n-3}. 
\end{align}
Note that $\int_M \psi_2 i\p\bp\psi_1\wedge\Tr(R^2)\wedge\omega^{n-3}$ is symmetric in $\psi_1$ and $\psi_2$. Now we compute the first term in (\ref{tr2cal}). By properties \textit{(2)} and \textit{(3)} of Proposition \ref{bc}, we have
\begin{equation*}
\left.\frac{d}{dt}\right|_{t=0} \Tr_2(R_{t\psi_1})=\bp\p 2\Tr[R\dot{\omega}\omega^{-1}]=2\bp\p \left(R_{p}{}^{q}{}_{j\bk}\p_{q}\p_{\bl}\psi_1 g^{p\bl} dz^j\wedge d\bz^k \right).
\end{equation*} 
Using this, we can compute
\begin{align}\label{final3}
& -n(n-1)\int_M \psi_2 \left( \left.\frac{d}{dt}\right|_{t=0} \Tr_2(R_{t\psi_1}) \right)\wedge \omega^{n-2}  \nonumber\\
&\;= -2n(n-1)\int_M \psi_2\bp\p \left(R_{p}{}^{q}{}_{j\bk}\p_{q}\p_{\bl}\psi_1 g^{p\bl} dz^j\wedge d\bz^k \right) \wedge\omega^{n-2}  \nonumber\\
&\;=2n(n-1)\int_M \p\bp\psi_2\wedge \left(R_{p\br j\bk}\p_{q}\p_{\bl}\psi_1 g^{p\bl}g^{q\br}dz^j\wedge d\bz^k \right)\wedge \omega^{n-2} \nonumber\\
&\;= \frac{2n(n-1)}{i}\int_M i\p\bp\psi_2 \wedge\langle iR_{j\bk},i\p\bp\psi_1 \rangle dz^j\wedge d\bz^k \wedge \omega^{n-2} \nonumber\\
&\;=-2n(n-1)\int_M i\p\bp\psi_2 \wedge i\langle iR_{j\bk},i\p\bp\psi_1 \rangle dz^j\wedge d\bz^k \wedge \omega^{n-2}  \nonumber\\
&\;=-2\int_M \left[ \Delta\psi_2 \langle \ric, i\p\bp\psi_1 \rangle - g^{i\bar{q}}g^{p\bar{j}}g^{r\bar{l}}g^{k\bar{s}} R_{p\bar{q}r\bar{s}} \p_i\p_{\bar{j}}\psi_2 \p_k\p_{\bar{l}}\psi_1 \right]\omega^n, 
\end{align}
where we used the fact $R_{p\bar{q} r\bl}=R_{r\bl p\bar{q}}$ to get the third identity. Note that $g^{i\bar{q}}g^{p\bar{j}}g^{r\bar{l}}g^{k\bar{s}} R_{p\bar{q}r\bar{s}} \p_i\p_{\bar{j}}\psi_2 \p_k\p_{\bar{l}}\psi_1$ is symmetric in $\psi_1$ and $\psi_2$. Combining (\ref{final1}), (\ref{final2}), and (\ref{final3}), we obtain

\begin{align*}
&\psi_1\cdot\gamma^{(2)}(\psi_2)=\left(\text{terms symmetric in $\psi_1$ and $\psi_2$} \right) \nonumber\\
&\quad +\frac{1}{24}\int_M \left[ -4\Delta\psi_2\langle i\p\bp\psi_1,\ric \rangle -6\Delta\psi_1\langle i\p\bp\psi_2,\ric \rangle -2\Delta\psi_2\langle i\p\bp\psi_1,\ric \rangle \right] \frac{\omega^n}{n!},
\end{align*}
which is symmetric in $\psi_1$ and $\psi_2$.
\end{proof}

\textit{Proof 1} shows that for any smooth path $\varphi_t$ in $\mathcal{K}_\omega$ joining $0$ and $\varphi$, we have
\begin{align*}
&\int_M -i BC(\Td_2;\omega_\varphi,\omega)\frac{\omega^{n-1}}{(n-1)!}+\Td_{2}(R_\varphi)\frac{-1}{(n-1)!}\sum^{n-2}_{s=0}\varphi \omega_{\varphi}^s\wedge \omega^{n-2-s} \\
&= \int_0^1 dt \int_M \dot{\varphi_t}\left( \frac{1}{6}\Delta_t S_t-\frac{1}{24}\left(|R_t|^2-4|\ric_t|^2+3S_t^2\right) \right)\frac{\omega_t^n}{n!},
\end{align*}
without polarization assumption on $[\omega]$.

From \textit{Proof 2}, we can read off the second variation formula of $S_2$, Proposition \ref{m2}.

\bibliographystyle{alpha}
\bibliography{References}

\end{document}